\theoremstyle{definition}
\newtheorem{thm}{Theorem}
\newtheorem{lem}{Lemma}
\newtheorem*{thma}{Theorem A}
\newtheorem*{lema}{Lemma A}
\newtheorem{prop}{Proposition}
\newtheorem{defin}{Definition}
\theoremstyle{remark}
\theoremstyle{remark}\newtheorem{rem}{Remark}
\theoremstyle{remark}\newtheorem{notat}{Notation}
\def\lra{\longrightarrow}
\def\Ex{\mathbb{E}}
\def\notin{\epsilon \hspace{-0.37em}/}
\newcommand{\E}{\ensuremath{\mathbb E}}
\newtheorem{kz thms}{Theorem}
\begin{document}

\title{Empirical Quantile CLTs for Some Self-Similar Processes}
\author{James Kuelbs} 
\address{James Kuelbs\\Department of Mathematics,  University of Wisconsin, Madison, WI 53706-1388}
\email{kuelbs@math.wisc.edu}
\author{Joel Zinn}
\address{\noindent Joel Zinn\\Department of Mathematics, Texas A\&M University, College Station, TX 77843-3368}
\email{jzinn@math.tamu.edu}
\subjclass[2010]{Primary 60F05, Secondary 60F17, 62E20}
\keywords{central limit theorems, empirical processes, empirical quantile processes, self-similar processes}
\begin{abstract}  In \cite{swanson-scaled-median} a CLT for the sample median of independent Brownian motions with value $0$ at $0$  was proved. Here we extend this result in two ways. We prove such a result for a collection of self-similar processes which include the fractional Brownian motions and also all stationary, independent increment symmetric stable processes tied down at $0$. Second, our results hold uniformly over all quantiles in a compact sub-interval of $(0,1)$. We also examine sample function properties connected with these CLTs.

\end{abstract}

\maketitle
\section{Introduction}\label{sec1}

Let  $X=\{X(t)\colon\ t \in E\}\equiv \{X_t :t \in E\}$ be a stochastic process with $P(X(\cdot) \in D(E))=1,$ where $E$ is a set and $D(E)$ is a collection of real valued 
functions on $E$. Also, let $\mathcal{C}=\{C_{s,x}\colon\  s \in E, x \in \mathbb{R}\},$ where $C_{s,x}=\{z \in D(E)\colon\  z(s) \le x\}, s \in E, x \in \mathbb{R}$. If  $\{X_{j}\}_{j=1}^{\infty}$ are i.i.d. copies of the stochastic process $X$ and $F(t,x):=P(X(t) \le x)=P( X(\cdot) \in C_{t,x})$, then the empirical distributions built on $\mathcal{C}$ (or built on the process $X$) are defined by 
\[
F_n(t,x)= \frac{1}{n} \sum_{i=1}^n I_{(-\infty, x]}(X_i(t))= \frac{1}{n}\sum_{i=1}^n I_{\{X_i \in C_{t,x}\}},\ C_{t,x} \in \mathcal{C},
\]
and we say $X$ is the input process.

The empirical processes indexed by $\mathcal{C}$ (or just $E\times\mathbb{R})$ and built from the process, $X$, are given by
\begin{align}\label{nu_{n}}\nu_{n}(t,x):=\sqrt{n}\bigl(F_{n}(t,x)-F(t,x)\bigr).
\end{align}

In \cite{kkz} we studied the central limit theorem in this setting, that is, we found sufficient conditions for a pair $(\mathcal{C}, P)$, where $P$ is the law of $X$ on $D(E)$, ensuring that the sequence of empirical processes $\{\nu_n(t,x): (t,x) \in E\times \mathbb{R}\}, n \ge1,$
converge to a centered Gaussian process, $G=\{G_{t,x}\colon\  (t,x)\in E\times\mathbb{R}\}$ with covariance
\[
\E(G(s,x)G(t,y)) = \E([I(X_s \leq x)-P(X_s \leq x)][I(X_t \leq y) -P(X_t \leq y)]).
\]

In \cite{kz-quant}, using an idea of Vervaat \cite{vervaat-quantiles} and the results of \cite{kkz}, we obtain (central) limit theorems for empirical quantile processes built from  i.i.d. copies $\{X_j:j \ge 1 \}$ of $X$, where $X$ can be chosen from a broad collection of Gaussian processes, compound Poisson processes, symmetric stationary independent increment stable processes, and certain martingales.

The quantiles and empirical quantiles are defined as the left-continuous inverses of $F(t,x)$ and $F_{n}(t,x)$ in the variable $x$, respectively: 
\begin{equation}\label{quantile} \tau_{\alpha}(t)=F^{-1}(t,\alpha)=\inf\{x\colon F(t,x)\ge \alpha\}
\end{equation}
and 
\begin{equation}\label{empirical quantile} \tau^{n}_{\alpha}(t)=F_{n}^{-1}(t,\alpha)=\inf\{x\colon F_{n}(t,x)\ge \alpha\}.
\end{equation}
The CLTs obtained in \cite{kz-quant}, and which we continue to seek, include Gaussian limits for the empirical quantile processes  
\begin{equation}\sqrt{n}\bigl(F_{n}^{-1}(t,\alpha)-F^{-1}(t,\alpha)\bigr)\notag,
\end{equation}
or in the more compact notation for
\begin{equation*}\sqrt{n}\bigl( \tau^{n}_{\alpha}(t)- \tau_{\alpha}(t)\bigr).
\end{equation*}

The papers of Swanson \cite{swanson-scaled-median} and \cite{swanson-fluctuations} dealing with Brownian motion were the first to motivate our interest in this set of problems, but the techniques we use are quite different, and apply to a much broader set of input processes. Nevertheless, the results in \cite{kz-quant} do not fully capture those of Swanson, even for the Brownian motion case, and the main purpose of this paper is to show how these results can be extended to obtain those of Swanson for Brownian motion, and also a large class of self-similar processes. To understand this problem in more depth, we turn to some additional details. 

In the first of these papers Swanson obtained a central limit theorem for the median process, when in our terminology the input process $\{X_t: t \ge 0\}$ is a sample continuous Brownian motion tied down to have value $0$ at time $0$. In the second he establishes a CLT for the empirical quantile process for each fixed $\alpha \in (0,1)$, but now $\{X_t: t \ge 0\}$ is assumed to be a sample continuous Brownian motion whose distribution at time zero is assumed to have a density with a unique $\alpha$ quantile. In particular, his results are uniform in $t \in [0,T]$ for $T \in (0,\infty)$, but only for fixed $\alpha$, and only for empirical quantile processes of Brownian motion.
Hence the empirical quantile CLTs in \cite{kz-quant} are more general in many ways, but because our method of proof for these results depends on the empirical CLT over $\mathcal{C}$ holding, which fails when $X$ is Brownian motion on $[0,T]$ with $X(0)=0$, they apply to Brownian motion on $[0,T]$ only when we start with a nice density at time $0$. On the other hand, as can be seen from Theorem 2 in \cite{kz-quant}, our results apply to general classes of processes, including symmetric stationary independent increment stable processes and fractional Brownian motions, as long as they have a nice density at time $0$. Moreover, these quantile CLTs are uniform in $t \in [0,T]$ and also in $\alpha \in I$, where $I$ is a closed subinterval of $(0,1)$. However, as can be seen from  \cite{kkz} and Corollary 3 in \cite{kz-quant}, many of these processes fail the empirical CLT over $\mathcal{C}$ on $[0,T]$ when they start at $0$ when $t=0$, so to apply our quantile CLTs in \cite{kz-quant} to such processes we must find a way to circumvent the assumption of the empirical CLT holding over $\mathcal{C}$ on $[0,T]$. 

This is our first task, and the precise result appears in the statement of Theorem \ref{scale-quant} of the next section.  In particular, it is shown that a CLT for empirical quantile processes holds for a variety of self-similar input processes with value $0$ at time $t=0$. These weak convergence results are uniform simultaneously in the process parameter $t\in [0,T]$  and with respect to $\alpha$ in a fixed closed subinterval of $(0,1)$.  The main tool to handle the $t$'s in a neighborhood of the origin, is to relate the process in that neighborhood to the process in an interval bounded away from the origin where the results in \cite{kz-quant} can be applied. This leads us to define ``scalable processes'', which controls certain expectations - if they exist. We, then, have to show that while similar expectations do not exist for, e.g., stable processes, they nonetheless exist for quantiles of such processes. 

Theorem \ref{scale-quant-app} follows by an application of Theorem \ref{scale-quant}, and  shows that fractional Brownian motions and symmetric stationary independent increment stable processes have empirical quantile CLTs on $[0,T]$ even if they start at zero when $t=0$. Theorem \ref{quant-CLT-paths} investigates how sample path properties of the input process $\{X(t): t\ge 0\}$ can be used to obtain our CLTs in related function spaces, and a special case of its applications show how the results of  \cite{swanson-scaled-median} and \cite{swanson-fluctuations} for Brownian motion can be obtained. Of course, there are many other self-similar processes, and a natural question is whether similar results hold in a wide variety of other situations. We do not settle that question here, but in section five we indicate a number of important examples to which the results in Theorems \ref{scale-quant} and \ref{quant-CLT-paths} apply.

\section{Definitions, Notation, and Statements of Results}

\begin{defin} A sequence of stochastic processes, $\{W_{n}(t)\colon t\in T\}$, with almost surely bounded sample paths is said to converge (weakly) or in law in $\ell_{\infty}(T)$ to a Gaussian r.v., $G$, with law $\gamma$,  if $\gamma$ is Radon on 
$\ell_{\infty}(T)$ (with the usual sup-norm), or equivalently, (see Example 1.5.10 in \cite{vw}), that $G$ has sample paths which are bounded and uniformly continuous on $T$ with respect to the pseudo-metric
\begin{equation}\label{eq1.1}
d(s,t) 
= \E([G(s)-G(t)]^2)^{\frac{1}{2}}.
\end{equation}
And further, for every bounded, continuous $F\colon\  \ell_{\infty}(T)\lra \mathbb{R}$, 
\[
\lim_{n\to\infty}\E^{*}F(W_{n})=\E F(G),
\]
where $\E^{*}$ denotes the upper expectation (see, e.g., p. 94 in \cite{Dudley-unif-clt}). We denote this by writing $W_n \Rightarrow G$.
\end{defin}

In what follows the stochastic processes can be the empirical processes, the empirical quantile processes or rather general processes (as in Lemma \ref{weak-conv-paths}). 

To rescale small time intervals near zero to intervals where results in \cite{kz-quant} will hold, we assume the the processes $X$ are self-similar, which is given by:

\begin{defin} A stochastic process, $\{X(t): t\ge 0\}$ 
is said to be $H$-self-similar if there exists $H \in (0,\infty)$ such that for all $c > 0$ the processes
$\{X(ct): t\ge 0\}$ and $\{c^H X(t): t\ge 0\}$ have the same finite dimensional distributions.
\end{defin}

\noindent We also will have need for another form of rescaling, which is given by:

\begin{defin} A stochastic process $\{W(t,\alpha):t\geq 0, \alpha\in A\}$ will be called $H$-scalable (in $t$), if $W(0,\alpha)=0$ for all $\alpha\in A$ and for some $H \in (0,\infty)$ and all $c \in (0,\infty)$ the processes  
\[\{W(c t,\alpha):t\geq 0, \alpha\in A\} \text{ \rm and } \{c^{H}W(t,\alpha):t\geq 0, \alpha\in A\}
\] 
have the same finite dimensional distributions.
\end{defin} 

Some notation and additional assumptions on the the stochastic processes to which our results apply are as follows.
\bigskip

\noindent (A-1): The stochastic processes $\{X(t):t \ge 0\}$ and $\{X_j(t):t \ge 0\}, j \ge 1,$ are i.i.d. on a suitable probability space $(\Omega,\mathcal{F},P)$ with cadlag paths, and are self-similar with index $H>0$.
\bigskip

\noindent (A-2): For each $t>0$ the distribution functions
$F(t,x)=P(X(t) \le x)$ have probability densities $f(t,x)$ that are symmetric about zero and such that
for each $t \in (0,\infty):$
$$ 
0<f (t,x)<\infty~ {\rm  {for~ all~}} x \in \mathbb{R}, ~{\rm{and}}~f(t,x)~{\rm{ is~ decreasing~ in~}} x \in [0,\infty).
$$

\begin{notat}
Let $\mathbb{Q}$ denote the rational numbers, $J=[1,2]$, $A=[1-\alpha^{*},\alpha^{*}], \frac{1}{2} <\alpha^{*} <1$, and for a subset $B$ of $\mathbb{R}$ we define $B_{\mathbb{Q}}= B \cap \mathbb{Q}$. 
\end{notat}

\begin{rem} The assumptions (A-1) and (A-2) are motivated by previous results in the area, see, for example, \cite{kkz} and \cite{kz-quant}, where some connections are made to the fact that we establish quantile CLTs which are uniform in $\alpha \in I$, where $I$ is an arbitrary closed interval in $(0,1)$. If $X$ is $H$-self-similar on $[0,\infty)$ with $H>0$, then it is easy to see we also have $X(0)=0$ with probability one.
\end{rem}

To obtain our results we also will have need for three less transparent assumptions. However, the first is studied in \cite{kkz}, with additional results in \cite{kz-quant}. The second is shown to mesh nicely with the empirical quantile CLTs in  \cite{kz-quant} via the use of self-similarity, and can be verified for many processes using the results of \cite{kkz} and \cite{kz-quant}. The third follows from basic properties of the input process $X$ in the eventual applications we obtain. However, all are nontrivial assumptions on the class of processes to which our results apply.
\bigskip

\noindent(A-3): The empirical CLT over $\mathcal{C}$ on $J$ with input data determined by i.i.d. copies of $\{X(t):t \ge 0\}$ holds in $\ell_{\infty}(J\times \mathbb{R}).$

\begin{rem}Since the process $\{X(t):t \ge 0\}$ is self-similar, it can be checked  
that the empirical quantile processes formed from i.i.d. copies of $X$ are scalable, with the same index,  and hence the interval $J= [1,2]$ can be replaced by any interval
$[a,b]$, where $0<a <b<\infty$. A similar fact also holds for the empirical CLT over $\mathcal{C}$ on $J$.
\end{rem}

\noindent(A-4): The empirical quantile CLT with input data determined by i.i.d. copies of $\{X(t):t \ge 0\}$ holds in $\ell_{\infty}(J\times I),$ where $I$ is an arbitrary closed interval of $(0,1)$.
The Gaussian limit process $\tilde G$ is centered and has covariance as given in Remark \ref{G-cov}  
below.

\begin{rem} The fact that the finite dimensional distributions of the empirical quantiles converge to centered Gaussian distributions with covariance structure as given in Remark \ref{G-cov} 
is a well-known classical result, see, for example, \cite{reiss-book}. Examples of the CLT in  $\ell_{\infty}(J \times I)$ as in (A-4) are much more recent, but many  can be found in \cite{kz-quant}.
\end{rem}

\begin{rem} If the densities $f(t,x)$ are such that
\begin{equation}\label{unif-cont-densities}
 \lim_{\delta\to 0}\sup_{t \in E} \sup_{|u-v|\le\delta}|f(t,u) -f(t,v)| =0, 
\end{equation}
and for every closed interval $I$ in $(0,1)$ there is an $\theta(I)>0$ such that 
\begin{equation}\label{inf-eq}
\inf_{t \in E,\alpha \in I,|x-\tau_{\alpha}(t)|\le \theta(I)} f(t,x) \equiv c_{I,\theta(I)}>0,
\end{equation}
then by Theorem 1 of [4] with $E=J$ the condition (A-3) implies that (A-4) holds. 
\end{rem}

The final assumption is that:
\bigskip

\noindent(A-5): There exists $0<\theta<\infty$ and $c_{\theta}<\infty$ such that 
\begin{align}\label{tail assumption}
P(\sup_{t\in J_{Q}}|X(t)|>u)\le c_{\theta}u^{-\theta} \text{ for all } u \in (0,\infty). 
\end{align}

\begin{thm}\label{scale-quant} Let $X=\{X(t): t \ge 0\}$ satisfy (A-$i$), $ i=1,2,3,4,5.$
Then, for each $T \in (0,\infty)$ the empirical quantile processes
\[\{\sqrt n (\tau_{\alpha}^n(t)- \tau_{\alpha}(t)) \colon\  n \geq 1\}\]
satisfy the CLT in $\ell_{\infty}([0,T] \times I)$ with centered Gaussian limit process 
\begin{equation}\label{quant-CLT-near-zero-2}
\left\{\tilde G(t,\alpha)\colon\  (t,\alpha) \in [0,T] \times I \right\},
\end{equation}
where $\tilde G(0,\alpha)=0, \alpha \in I$, 
\begin{equation}\label{quant-CLT-near-zero-3}
\tilde G(t,\alpha) = \frac{G(t,\tau_{\alpha}(t))}{f(t, \tau_{\alpha}(t))}, (t,\alpha) \in (0,T] \times I,
\end{equation}
and 
$\{G(t,x): t>0,x \in \mathbb{R}\}$ is the centered Gaussian process determined by the finite dimensional distributional limits of the empirical processes with covariance for $(s,x),(t,y) \in (0,\infty)\times\mathbb{R}$ 
\begin{equation}\label{G-covariance}
\E(G(t,x)G(t,y))= P(X_s \le x,X_t \le y)- P(X_s \le x)P(X_t \le y).
\end{equation}
\end{thm}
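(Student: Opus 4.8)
The plan is to check, for weak convergence in $\ell_\infty([0,T]\times I)$, convergence of the finite dimensional distributions together with the asymptotic equicontinuity/partition criterion for tightness (see Theorems 1.5.4 and 1.5.7 in \cite{vw}), the real work being confined to a neighbourhood of $t=0$. Write $W_n(t,\alpha)=\sqrt n(\tau^n_\alpha(t)-\tau_\alpha(t))$. Two remarks reduce the problem. First, since $X(0)=0$ a.s., all $X_j(0)=0$, so $F_n(0,\cdot)$ is the unit step at $0$ and $\tau^n_\alpha(0)=\tau_\alpha(0)=0$; thus $W_n(0,\alpha)\equiv 0=\tilde G(0,\alpha)$, and the slice $t=0$ is trivial both before and in the limit. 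Second, $H$-self-similarity of $X$ (jointly in $t$) gives $\{W_n(ct,\alpha)\}_{t,\alpha}\stackrel{d}{=}\{c^HW_n(t,\alpha)\}_{t,\alpha}$ for $c>0$, and the same scaling holds for $\tilde G$ (using \eqref{quant-CLT-near-zero-3}, the joint self-similarity in the covariance \eqref{G-covariance}, and $f(ct,c^Hx)=c^{-H}f(t,x)$); hence $W_n$ and $\tilde G$ are $H$-scalable and (A-3), (A-4) transfer from $J=[1,2]$ to any $[a,b]$ with $0<a<b<\infty$. Convergence of the finite dimensional distributions is then the classical empirical quantile CLT at finitely many times with positive time coordinate (see \cite{reiss-book}), whose covariance is exactly \eqref{quant-CLT-near-zero-3}--\eqref{G-covariance}, supplemented by the deterministic value $0$ at $t=0$.

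Covering $[\delta_0,T]$ by finitely many intervals $[2^j\delta_0,2^{j+1}\delta_0]=2^j\delta_0\,J$ and applying (A-4) with the scaling identity on each, the partition criterion gives $W_n\Rightarrow\tilde G$ in $\ell_\infty([\delta_0,T]\times I)$ for each $\delta_0\in(0,T]$; in particular $[\delta_0,T]\times I$ is totally bounded for the intrinsic pseudometric $d$ of $\tilde G$. Since $d((t,\alpha),(0,\beta))^2=\E\tilde G(t,\alpha)^2=t^{2H}\E\tilde G(1,\alpha)^2\to0$ uniformly in $\alpha\in I$ as $t\to0$, and $\E\sup_{[0,1]\times I}|\tilde G|\le(2^H-1)^{-1}\E\sup_{[1,2]\times I}|\tilde G|<\infty$ by scalability, the set $[0,T]\times I$ is $d$-totally bounded and $\tilde G$ is a genuine Radon Gaussian element of $\ell_\infty([0,T]\times I)$ with $\tilde G(0,\cdot)=0$. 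By the partition criterion it therefore suffices to produce, for each $\epsilon,\eta>0$, a finite partition of $[0,T]\times I$ on whose cells the oscillation of $W_n$ exceeds $\epsilon$ with probability $<\eta$ for all large $n$. Combining the partition of $[\delta_0,T]\times I$ above with the single cell $[0,\delta_0)\times I$ (whose oscillation is $\le 2\sup_{0\le t\le\delta_0,\,\alpha\in I}|W_n|$), this reduces to
\begin{equation}\label{near-origin-goal}
\lim_{\delta\to0}\ \limsup_{n\to\infty}\ P^{*}\Big(\sup_{0\le t\le\delta,\ \alpha\in I}|W_n(t,\alpha)|>\epsilon\Big)=0,\qquad\epsilon>0 .
\end{equation}

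To get \eqref{near-origin-goal} I will prove that, for some $n_0$,
\begin{equation}\label{key-exp-bound}
B_1:=\sup_{n\ge n_0}\ \E\Big[\sup_{(s,\alpha)\in J\times I}\big|\sqrt n(\tau^n_\alpha(s)-\tau_\alpha(s))\big|\Big]<\infty ;
\end{equation}
indeed, decomposing $(0,\delta]$ into the dyadic blocks $[2^{-k-1},2^{-k}]=2^{-k-1}J$ with $2^{-k}\le\delta$ and applying the scaling identity blockwise gives $\E[\sup_{0<t\le\delta,\alpha\in I}|W_n(t,\alpha)|]\le B_1\sum_{2^{-k}\le\delta}2^{-(k+1)H}\le(2^H-1)^{-1}\delta^HB_1$ for $n\ge n_0$, and Markov's inequality together with $W_n(0,\cdot)\equiv0$ yields \eqref{near-origin-goal}. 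To prove \eqref{key-exp-bound}, set $\overline\alpha=\sup I<1$, $C=\sup_{J\times I}|\tau_\alpha(s)|<\infty$, and $c_I=2^{-H}f(1,C+1)>0$, a lower bound for $f(s,y)$ over $s\in J$, $|y-\tau_\alpha(s)|\le1$, $\alpha\in I$ (using (A-2) and $f(s,y)=s^{-H}f(1,ys^{-H})$). With $V_n^{+}=\sqrt n\sup_{J\times I}(\tau^n_\alpha(s)-\tau_\alpha(s))_{+}$ and $u=\sqrt n\,v$ I bound $P(V_n^{+}>u)$ in three ranges. For $v\le1$: $\tau^n_\alpha(s)>\tau_\alpha(s)+v$ forces $F_n(s,\tau_\alpha(s)+v)<\alpha=F(s,\tau_\alpha(s))$, hence $\nu_n(s,\tau_\alpha(s)+v)<-\sqrt n\int_{\tau_\alpha(s)}^{\tau_\alpha(s)+v}f(s,y)\,dy\le-c_I u$, so $P(V_n^{+}>u)\le P(\sup_{J\times\R}|\nu_n|>c_I u)$, whose integral over $0\le u\le\sqrt n$ is $\le c_I^{-1}\E[\sup_{J\times\R}|\nu_n|]\le c_I^{-1}B$, where $B:=\sup_n\E[\sup_{J\times\R}|\nu_n|]<\infty$ is the uniform bound on the mean of the empirical process accompanying the Donsker property (A-3) for the classes of \cite{kkz}. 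For $1\le v\le w_0$ (with $w_0$ from the next case): the same with $v$ replaced by $1$ gives $P(V_n^{+}>u)\le P(\sup_{J\times\R}|\nu_n|>c_I\sqrt n)\le e^{-2(c_I\sqrt n-B)_+^2}$ by bounded differences ($\sup_{J\times\R}|\nu_n|$ changes by $\le n^{-1/2}$ under a change of one $X_j$), so its integral over $[\sqrt n,w_0\sqrt n]$ is $\le(w_0-1)\sqrt n\,e^{-2(c_I\sqrt n-B)_+^2}\to0$. For $v\ge w_0$: $\tau^n_\alpha(s)>\tau_\alpha(s)+v$ forces $\tau^n_\alpha(s)>v-C$, i.e. at least $n(1-\overline\alpha)$ of the paths $X_1,\dots,X_n$ exceed $v-C$ somewhere on $J$, whence
\[
P(V_n^{+}>u)\le\binom{n}{\lceil n(1-\overline\alpha)\rceil}P\big(\sup_{J}X>v-C\big)^{\lceil n(1-\overline\alpha)\rceil}\le\Big(\tfrac{e}{1-\overline\alpha}P\big(\sup_{J}X>v-C\big)\Big)^{n(1-\overline\alpha)},
\]
and by (A-5) (with $\sup_JX\le\sup_{J_{\mathbb Q}}|X|$ from the cadlag paths) $P(\sup_JX>v-C)\le c_\theta(v-C)^{-\theta}$; choosing $w_0$ so large that $\tfrac{e}{1-\overline\alpha}c_\theta(w_0-C)^{-\theta}\le\tfrac12$, the integral over $u\ge w_0\sqrt n$ is $\le\sqrt n\cdot O(n^{-1})\cdot2^{-n(1-\overline\alpha)}\to0$ once $n\theta(1-\overline\alpha)>1$. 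Summing the three contributions bounds $\sup_{n\ge n_0}\E V_n^{+}$; the mirror-image argument (using $F_n(s,\tau_\alpha(s)-v)\ge\alpha$ and the lower level $\inf I>0$) bounds $\sup_{n\ge n_0}\E V_n^{-}$, giving \eqref{key-exp-bound}.

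The main obstacle is the third range above. For heavy-tailed inputs --- e.g. symmetric $\beta$-stable processes with $\beta\le1$ --- one has $\E\sup_J|X|=\infty$, so the crude bound $|\tau^n_\alpha(s)|\le\max_j|X_j(s)|$ is useless; what rescues the estimate is that a large deviation of a quantile at a level bounded away from $0$ and $1$ forces a positive fraction of the $n$ independent paths to be large, an event whose probability decays geometrically in $n$ by (A-5) --- this is precisely the paper's point that such expectations fail for the input processes but survive for their quantiles. The restriction $n\ge n_0$ in \eqref{key-exp-bound} (needed so the integral in the third range converges) is harmless, since \eqref{near-origin-goal} involves only $\limsup_{n\to\infty}$. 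Once \eqref{near-origin-goal} holds, the partition criterion assembles $W_n\Rightarrow\tilde G$ in $\ell_\infty([0,T]\times I)$, the limit carrying the covariance read off from the finite dimensional distributions.
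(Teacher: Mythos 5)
Your proposal follows essentially the same route as the paper: finite-dimensional convergence plus the partition criterion of \cite{vw}, with $[\delta,T]\times I$ handled by (A-4) and scaling, and the neighbourhood of the origin handled by a uniform-in-$n$ first-moment bound on $J\times I$ transferred to $(0,\delta]$ via $H$-scalability over dyadic blocks (the paper's Lemma \ref{simplified homogeneous}), the moment bound itself being proved exactly as in the paper's Proposition \ref{tail bounds} --- empirical-process control over $\mathcal{C}$ plus a density lower bound from self-similarity for moderate deviations, and the binomial/combinatorial argument with (A-5) for large deviations. The only point you gloss over is the measurability of the uncountable suprema, which the paper settles in Lemma \ref{ctble-unctble-sups} by showing $\tau^n_\alpha(t)$ is cadlag in $t$ and left-continuous in $\alpha$, so the suprema over $[0,T]\times A$ and over $[0,T]_{\mathbb{Q}}\times A_{\mathbb{Q}}$ agree almost surely.
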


\begin{rem}\label{G-cov} Given (\ref{G-covariance}),  the covariance function of $\tilde G$ is such that 
$$
\E(\tilde G(0,\alpha) \tilde G(0,\beta))=0
$$ 
for $\alpha,\beta \in I$, and for $(s, \beta),(t,\alpha) \in (0,\infty) \times I$,
\begin{equation}\label{quant-CLT-near-zero-4}
\E(\tilde G(s,\beta) \tilde G (t,\alpha)) = \frac{P(X_s \le \tau_{\beta}(s),X_t \le \tau_{\alpha}(t))- \alpha \beta}{f(s,\tau_{\beta}(s))f(t, \tau_{\alpha}(t))}.
\end{equation}
\end{rem}

Our next result indicates that Theorem \ref{scale-quant} applies when the input process $X=\{X(t): t \ge 0\}$ is a sample continuous fractional Brownian motion with $X(0)=0$, or a symmetric stable process of index $ r \in (0,2)$ with stationary independent increments, cadlag paths, and $X(0)=0$. In either situation, the results of \cite{kz-quant} fail to apply directly, since the empirical CLT fails over $\mathcal{C}$ on $[0,T]$ for such processes, see \cite{kkz} and \cite{kz-quant}. In order to be precise about parameters for these processes we adopt the following notation.
 
 \begin{notat}
The process $\{X(t): t \ge 0\}$ is fractional Brownian motion with parameter $r \in (0,2)$ if it is a centered Gaussian process with covariance
\begin{equation}\label{fbm-covar}
\E(X(s)X(t))= \frac{1}{2}[t^r +s^r -|t-s|^r], s,t \ge 0.
\end{equation}
Furthermore, we assume the sample continuous version is chosen for use in our results involving fractional Brownian motions. The process $\{X(t): t \ge 0\}$ is a symmetric stable process of index $ r \in (0,2)$ with stationary independent increments, and $X(0)=0$ if there is a constant $c \in (0,\infty)$ such that for all $t \ge 0$ the characteristic function of $X(t)$ is given by
\begin{equation}\label{ssr-charact}
\E(\exp\{iuX(t\})= \exp \{-ct|u|^r\}, u \in \mathbb{R}.
\end{equation}
We assume that a cadlag path version is used in our results involving stable processes $X$ when $r \in (0,2)$.

\end{notat}

\begin{thm}\label{scale-quant-app} Let $X=\{X(t): t \ge 0\}$ be a symmetric $r$-stable process with stationary, independent increments, cadlag sample paths, or a centered sample continuous  fractional Brownian motion with parameter $r \in (0,2)$. Also, assume P(X(0)=0)=1, the  empirical quantile processes $\tau_{\alpha}^n(t)$ are built from i.i.d copies of $\{X(t): t \ge 0\}$ with sample paths those indicated for $X$, and $I$ is a closed subinterval of $(0,1)$. 
Then, for each $T \in (0,\infty)$ the empirical quantile processes
\[\{\sqrt n (\tau_{\alpha}^n(t)- \tau_{\alpha}(t)) \colon\  n \geq 1\}\]
satisfy the CLT in $\ell_{\infty}([0,T] \times I)$ with centered Gaussian limit process 
\begin{equation}\label{quant-CLT-app-1}
\left\{\tilde G(t,\alpha)\colon\  (t,\alpha) \in [0,T] \times I \right\},
\end{equation}
where $\tilde G(0,\alpha)=0, \alpha \in I$, and for $(s,\beta),(t,\alpha) \in (0,T]\times I$ its covariance is given by (\ref{quant-CLT-near-zero-4}). 
\end{thm}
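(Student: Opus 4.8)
The plan is to deduce Theorem \ref{scale-quant-app} from Theorem \ref{scale-quant} by checking that both input families satisfy assumptions (A-1)--(A-5); once that is done, the asserted CLT in $\ell_\infty([0,T]\times I)$ together with the covariance \eqref{quant-CLT-near-zero-4} of the limit $\tilde G$ is exactly the conclusion of Theorem \ref{scale-quant}. Note that the standing hypothesis $P(X(0)=0)=1$ is consistent with self-similarity with $H>0$, and is in fact forced by it.

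First I would check (A-1) and (A-2). The required i.i.d.\ copies with the indicated path regularity (sample continuous for fractional Brownian motion, cadlag for the symmetric stable process) are available by construction. Self-similarity holds with index $H=r/2$ for fractional Brownian motion, read off directly from \eqref{fbm-covar} via $\tfrac12[(ct)^r+(cs)^r-|ct-cs|^r]=c^r\cdot\tfrac12[t^r+s^r-|t-s|^r]$, and with index $H=1/r$ for the symmetric $r$-stable process, since by \eqref{ssr-charact} and stationarity and independence of increments $\E\exp\{iuX(ct)\}=\exp\{-ct|u|^r\}=\E\exp\{iu\,c^{1/r}X(t)\}$; in both cases $H>0$. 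For (A-2): for fractional Brownian motion $X(t)\sim N(0,t^r)$, so $f(t,x)=(2\pi t^r)^{-1/2}\exp\{-x^2/(2t^r)\}$ is everywhere positive, symmetric, and strictly decreasing on $[0,\infty)$; for the symmetric $r$-stable case with $r\in(0,2)$, $X(t)$ is a scale mixture of centered Gaussians (Brownian motion subordinated by an independent positive $r/2$-stable variable), so $f(t,x)=\int_0^\infty(2\pi a)^{-1/2}\exp\{-x^2/(2a)\}\,\mu_t(da)$ is again everywhere positive and finite, symmetric, and decreasing on $[0,\infty)$; alternatively one invokes the classical unimodality of symmetric stable laws.

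Next I would treat (A-3) and (A-4), which carry the real content. Although the empirical CLT over $\mathcal{C}$ fails on any interval $[0,T]$ containing the origin for these processes, it does hold on the interval $J=[1,2]$, which is bounded away from $0$: for both fractional Brownian motion and symmetric $r$-stable processes this is established in \cite{kkz} and recalled in \cite{kz-quant}, and gives (A-3). Assumption (A-4) then follows from the Remark preceding (A-5): on $J=[1,2]$ the Gaussian (resp.\ stable) densities are jointly continuous in $(t,x)$, everywhere positive, and satisfy the uniform equicontinuity \eqref{unif-cont-densities}, while $\tau_\alpha(t)$ is continuous, hence bounded, on the compact set $J\times I$, so \eqref{inf-eq} holds with $E=J$; the implication cited in that Remark then upgrades (A-3) to the empirical quantile CLT in $\ell_\infty(J\times I)$, i.e.\ (A-4), with limit covariance as in Remark \ref{G-cov}.

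Finally I would verify (A-5). For fractional Brownian motion $\sup_{t\in J}|X(t)|$ (which dominates the supremum over $J_\mathbb{Q}$) is the supremum of a bounded centered Gaussian process, so by Fernique's inequality and the Borell--TIS concentration bound it has a sub-Gaussian tail; hence \eqref{tail assumption} holds for \emph{every} $\theta>0$ with a suitable finite $c_\theta$. For the symmetric $r$-stable process with $r\in(0,2)$, $\sup_{t\in J}|X(t)|$ has a tail of order $u^{-r}$ as $u\to\infty$ (a standard consequence of the structure of stable paths), so \eqref{tail assumption} holds for any fixed $\theta\in(0,r)$, the estimate being trivial for small $u$ once $c_\theta\ge1$. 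With (A-1)--(A-5) verified, Theorem \ref{scale-quant} applies verbatim and yields the statement. I expect the genuine obstacle to lie in (A-3)/(A-4) --- the empirical (quantile) CLT over $\mathcal{C}$ on an interval bounded away from the origin --- although this is already available from \cite{kkz} and \cite{kz-quant}; the remaining items are routine, the only mild care being the monotonicity of the symmetric stable density and the exact tail order of its running supremum.
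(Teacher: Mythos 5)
Your proposal is correct and follows essentially the same route as the paper: verify (A-1)--(A-5) for the two families and invoke Theorem \ref{scale-quant}, with (A-3)/(A-4) coming from the results of \cite{kz-quant} on $J=[1,2]$, (A-5) from Gaussian concentration for fractional Brownian motion and the $u^{-r}$ tail of the running supremum (Proposition 5.6 of \cite{led-tal-book}) for the stable case. The only cosmetic difference is that for monotonicity of the symmetric stable density the paper cites Yamazato's unimodality theorem, whereas you offer the equivalent Gaussian scale-mixture (subordination) argument, mentioning unimodality as an alternative.
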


\begin{rem}\label{densities} Under the conditions of Theorem \ref{scale-quant-app},
when $X$ is a fractional Brownian motion with parameter $r \in (0,2)$ and $t>0,\alpha \in (0,1)$, then 
$$
f(t,\tau_{\alpha}(t))= (2\pi)^{-\frac{1}{2}}t^{-\frac{r}{2}} \exp\{-\frac{\tau_{\alpha}^2(1)}{2} \},
$$
and for  $(s, \beta),(t,\alpha) \in (0,T] \times I$ the covariance $\E(\tilde G(s,\beta)\tilde G(t,\alpha))$ in (\ref{quant-CLT-near-zero-4}) is given by
\begin{equation}\label{quant-CLT-app-2}
2\pi(st)^{\frac{r}{2}} \exp\{\frac{\tau_{\alpha}^2(1) +\tau_{\beta}^2(1)}{2} \}[P(X_s \le \tau_{\beta}(s),X_t \le\tau_{\alpha}(t))- \alpha\beta].
\end{equation}
If $X$ is a symmetric $r$-stable process with stationary, independent increments with $X(0)=0$, then for $r \in (0,2), t>0, \alpha \in (0,1),$
$$
f(t,\tau_{\alpha}(t))= t^{-\frac{1}{r}}f(1,\tau_{\alpha}(1)),
$$
where $f(1,x), x \in \mathbb{R},$ is the strictly positive density of a symmetric r-stable random variable $X(1)$, i.e. for $c>0$ (\ref{ssr-charact}) implies

$$
2\pi f(1,x)= \int_{\mathbb{R}}\exp\{ixu-c|u|^r\}du= \int_{\mathbb{R}}\exp\{-c|u|^r\} {\rm{ cos}} (xu)du.
$$
Hence, for $(t,\alpha), (s,\beta) \in (0,T]\times I$, (\ref{quant-CLT-near-zero-4}) becomes
\begin{equation}\label{quant-CLT-app-3}
\E(\tilde G (s,\beta)\tilde G (t,\alpha)) = s^{\frac{1}{r}}t^{\frac{1}{r}}[\frac{P(X(s) \le \tau_{\beta}(s),X(t) \le \tau_{\alpha}(t))- \alpha \beta}{f(1,\tau_{\beta}(1))f(1, \tau_{\alpha}(1))}].
\end{equation}
Moreover, if $\alpha=\beta=\frac{1}{2}$, then the symmetry about zero of the distribution of $X(t)$ implies $\tau_{\frac{1}{2}}(t)=0$ for each $t>0$. Hence, for $s,t>0$, (\ref{quant-CLT-app-2}) implies
\begin{equation}\label{quant-CLT-app-4}
\E(\tilde G(s,\frac{1}{2})\tilde G(t,\frac{1}{2}))= 2\pi(st)^{\frac{r}{2}}[P(X_s \le 0,X_t \le 0)- \frac{1}{4}],
\end{equation}
and (\ref{quant-CLT-app-3}) implies
\begin{equation}\label{quant-CLT-app-5}
\E(\tilde G(s,\frac{1}{2})\tilde G(t,\frac{1}{2}))= (2\pi)^2(st)^{\frac{1}{r}}[\frac{P(X_s \le 0,X_t \le 0)- \frac{1}{4}}{(\int_{\mathbb{R}} \exp \{-c|u|^r \} du)^2}],
\end{equation}
Furthermore, since  $\tilde G (0,\alpha)=0, \alpha \in I$, we  also have (\ref{quant-CLT-app-2}),(\ref{quant-CLT-app-3}), (\ref{quant-CLT-app-4}), and (\ref{quant-CLT-app-5}) when  $(t,\alpha), (s,\beta)$ $\in [0,T]\times I$.
\end{rem}

Now we turn to the question of how special sample path properties of the input process $\{X_t: t \in E\}$ influence our quantile CLTs. To be more specific, recall that the CLT results we have established for empirical quantile processes, hold in the space $\ell_{\infty}(E\times I)$, and the limiting Gaussian process $\{\tilde G (t,\alpha): (t,\alpha) \in E\times I\}$, almost surely,  has a version with paths which are bounded and uniformly continuous with respect to its own $L_2$ distance $d_{\tilde G}$ on $E\times I$. In particular, this guarantees that the measure induced by the Gaussian process on $\ell_{\infty}(E\times I)$ is supported on the subspace $C_{L_2}(E\times I)$ of $\ell_{\infty}(E\times I)$, where the subscript $L_2$ is written to indicate the topology on $E \times I$ is that given by the Gaussian process $L_2$ distance. Hence, if $\alpha \in (0,1)$ is fixed,  $e_1$ is a metric on $E$, and the input process $\{X_t: t \in E\}$ is assumed sample continuous on $(E
 ,e_1)$, when does our quantile CLT hold on the space of $e_1$-continuous paths? If $E=[0,T]$ with metric the usual Euclidean distance,  and the input process has cadlag sample paths on $[0,T]$, a similar question can be asked if the quantile CLT holds in some related space of functions. Since processes with continuous paths or cadlag paths are typical of many examples throughout probability and statistics, these are natural questions,  but they also relate to the results of Swanson. That is, he established a CLT in the space of continuous functions on $[0,T]$ for the median process obtained from sample continuous Brownian motions in \cite{swanson-scaled-median}, and for other individual quantile levels $\alpha \in (0,1)$ in \cite{swanson-fluctuations}. These results will follow from our next theorem, and are established as applications following its proof.

Since the empirical quantile processes have jumps as $\alpha$ ranges over $(0,1)$, to state our theorem providing some facts related to these questions, we need the following function spaces. If $e_1$ is a metric on $E$ we set
\begin{equation}\label{quant-CLT-functions-1}
\mathbb{C}_{e_1}(E)=\{z: z {\rm {~is~continuous~on~}} (E,e_1)\},
\end{equation}
if $E=[0,T]$ we assume $e_1$ is the usual Euclidean distance and let
\begin{equation}\label{quant-CLT-functions-2}
\mathbb{D}_1([0,T])=\{z: z \rm {~is~cadlag~on~} [0,T]\},
\end{equation}
where right and left limits are taken with respect to $e_1$ on $[0,T]$, and for $I=[a,b]$ a closed subinterval of $(0,1)$ we set
\begin{equation}\label{quant-CLT-functions-3}
\mathbb{D}_2(I)=\{z: z {\rm {~left~continuous~on~}} (a,b], z {\rm {~ has~right~limits~on~}}[a,b)\},
\end{equation}
where right and left limits are taken with respect to the usual Euclidean distance $e_2$ on $I$. We also define the closed linear subspaces of $\ell_{\infty}(E\times I)$ given by
\begin{align}\label{quant-CLT-functions-4}
&\mathbb{C}_{e_1}(E) \otimes\mathbb{D}_2(I)=\{ f(\cdot,\alpha) \in \mathbb{C}_{e_1}(E) ~\forall \alpha \in I \\
&\quad {\rm{~and~}} f(t,\cdot) \in \mathbb{D}_2(I)~\forall t\in [0,T]\},\notag
\end{align}
and 
\begin{align}\label{quant-CLT-functions-5}
&\mathbb{D}_1([0,T])\otimes\mathbb{D}_2(I)=\{ f(\cdot,\alpha) \in \mathbb{D}_1([0,T]) ~\forall \alpha \in I\\
&\quad  \text{ and } f(t,\cdot) \in \mathbb{D}_2(I)~\forall t\in [0,T]\}.\notag
\end{align}

\begin{thm}\label{quant-CLT-paths}
Let $\{X_t: t \in E\}$ be the input process for the empirical quantile processes  defined for $t \in E, \alpha \in (0,1), n \ge 1,$ by
\begin{equation}\label{emp-quant-processes}
W_n(t,\alpha) := \sqrt n(\tau_{\alpha}^n(t) - \tau_{\alpha}(t)),
\end{equation}
and assume they satisfy the empirical quantile CLT in $\ell_{\infty}(E \times I)$ with Gaussian limit $\{\tilde G(t,\alpha): (t,\alpha) \in E\times I\}$, 
and that $\tau_{\alpha}(\cdot) \in \mathbb{C}_{e_1}(E)$ for every $ \alpha \in I$. Then, we have:

(i) If $\{X_t: t \in E\}$ has version with paths in $\mathbb{C}_{e_1}(E)$, then the empirical quantile CLT holds in the Banach subspace $\mathbb{C}_{e_1}(E) \otimes\mathbb{D}_2(I)$ of $\ell_{\infty}(E\times I)$. In particular, if $\alpha \in (0,1)$ is fixed, then the CLT will hold in the space of continuous functions $\mathbb{C}_{e_1}(E)$ with the topology that given by the sup-norm. 

(ii) If $E=[0,T]$ and $\{X_t: t \in E\}$ has version with paths in $\mathbb{D}_{1}([0,T])$, we have the empirical quantile CLT holding in the Banach subspace $\mathbb{D}_{1}([0,T]) \otimes\mathbb{D}_2(I)$ of  $\ell_{\infty}([0,T]\times I)$. Hence, if $\alpha \in (0,1)$ is fixed, then the CLT will hold in the space of functions $\mathbb{D}_{1}([0,T])$ with the topology that is given by the sup-norm. 
\end{thm}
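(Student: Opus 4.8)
The plan is to derive Theorem \ref{quant-CLT-paths} from the abstract weak convergence criterion: a sequence converging weakly in $\ell_\infty(E\times I)$ to a limit supported on a closed linear subspace $\mathbb{V}$ automatically converges weakly in $\mathbb{V}$, provided the approximating random elements $W_n$ themselves lie in $\mathbb{V}$ (or lie there with probability tending to one, or can be replaced by versions that do). Concretely, I would argue in three stages. First, I would observe that the empirical quantile process $W_n(\cdot,\alpha)$ inherits, in the $t$-variable, exactly the sample-path regularity of the input process $\{X_t : t\in E\}$: for each fixed $\omega$ and fixed $\alpha$, $\tau^n_\alpha(t)$ is one of the order statistics $X_{(k)}(t)$ among $X_1(t),\dots,X_n(t)$ with $k=\lceil n\alpha\rceil$, and a finite max/selection of $e_1$-continuous (resp. cadlag) functions is again $e_1$-continuous (resp. cadlag). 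Since $\tau_\alpha(\cdot)\in\mathbb{C}_{e_1}(E)$ by hypothesis, $W_n(\cdot,\alpha)=\sqrt n(\tau^n_\alpha(\cdot)-\tau_\alpha(\cdot))$ lies in $\mathbb{C}_{e_1}(E)$ in case (i) and in $\mathbb{D}_1([0,T])$ in case (ii), for every $\alpha\in I$.

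Second, I would handle the $\alpha$-variable. For fixed $\omega$ and fixed $t$, the map $\alpha\mapsto F_n^{-1}(t,\alpha)$ is, by its definition as a left-continuous inverse (\ref{empirical quantile}), a nondecreasing left-continuous step function of $\alpha$ with right limits everywhere; similarly $\alpha\mapsto\tau_\alpha(t)=F^{-1}(t,\alpha)$ is nondecreasing and left-continuous with right limits (one can check left-continuity of the generalized inverse directly, and the nice-density assumptions in the eventual applications make $\tau_\alpha(t)$ actually continuous in $\alpha$, but left-continuity with right limits is all that is needed here). Hence $W_n(t,\cdot)\in\mathbb{D}_2(I)$ for every $t$. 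Combining the two coordinates, $W_n\in\mathbb{C}_{e_1}(E)\otimes\mathbb{D}_2(I)$ in case (i) and $W_n\in\mathbb{D}_1([0,T])\otimes\mathbb{D}_2(I)$ in case (ii), for every $n$ and a.e. $\omega$ — one should note these spaces are genuinely closed linear subspaces of $\ell_\infty(E\times I)$ in the sup-norm, which is a routine check (a uniform limit of $e_1$-continuous functions is $e_1$-continuous, a uniform limit of cadlag functions is cadlag, etc.).

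Third, I would invoke the hypothesis that $W_n\Rightarrow\tilde G$ in $\ell_\infty(E\times I)$ together with the standard fact (e.g. the portmanteau-type argument in \cite{vw}, or Addendum 1.3.x / the remarks on pushing weak convergence onto closed subspaces) that if $W_n$ takes values in a closed linear subspace $\mathbb{V}\subseteq\ell_\infty(E\times I)$ and $W_n\Rightarrow\tilde G$, then $\tilde G$ is supported on $\mathbb{V}$ and the convergence holds in $\mathbb{V}$; equivalently, one restricts to bounded continuous $F\colon\mathbb{V}\to\mathbb R$, extends them to $\ell_\infty$ if desired, and passes to the limit. Since here $\mathbb{V}$ is either $\mathbb{C}_{e_1}(E)\otimes\mathbb{D}_2(I)$ or $\mathbb{D}_1([0,T])\otimes\mathbb{D}_2(I)$, this gives (i) and (ii). The specializations to fixed $\alpha$ follow because the coordinate-type restriction map $f\mapsto f(\cdot,\alpha)$ is continuous from $\mathbb{V}$ into $\mathbb{C}_{e_1}(E)$ (resp. $\mathbb{D}_1([0,T])$), so one applies the continuous mapping theorem.

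The main obstacle I anticipate is not conceptual but a measurability/versions subtlety: the order-statistic process $t\mapsto X_{(k)}(t)$ is a selection among the $X_i(t)$, and one must be sure this selection produces a genuine element of $\mathbb{D}_1([0,T])$ (or $\mathbb{C}_{e_1}(E)$) — it does, since $X_{(k)}(t)$ is a continuous, in fact piecewise-linear, function of the vector $(X_1(t),\dots,X_n(t))$, hence continuous/cadlag whenever each $X_i(\cdot)$ is — and that the null sets on which $\tau^n_\alpha(\cdot)$ or $\tau_\alpha(\cdot)$ might misbehave can be taken uniformly in $\alpha$, which is where left-continuity of the inverse in $\alpha$ and monotonicity are used to reduce to countably many rational $\alpha$. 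Once the bookkeeping of "for every $\alpha$" versus "for a.e. $\omega$, every $\alpha$" is arranged correctly, the weak-convergence-onto-a-subspace step is essentially automatic.
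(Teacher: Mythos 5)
Your proposal is correct and follows essentially the same route as the paper: identify $\tau_\alpha^n(t)$ with the order statistic $X_{(\lceil n\alpha\rceil)}(t)$ so that $W_n$ inherits the path regularity of the input in $t$ (this is the content of the paper's Lemma \ref{ctble-unctble-sups}, proved via the Lipschitz bound $\max_k|x_{(k)}-y_{(k)}|\le\max_k|x_k-y_k|$), note left-continuity with right limits in $\alpha$ of the generalized inverses, and then transfer the weak convergence from $\ell_\infty(E\times I)$ to the closed subspace via the portmanteau theorem and a version of $\tilde G$ supported there (the paper's Lemma \ref{weak-conv-paths}, citing Theorems 1.3.4 and 1.3.10 of \cite{vw}). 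No substantive differences.
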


\section{Proof of Theorem \ref{scale-quant}}

To prove this result we start with several lemmas using the earlier notation that $\mathbb{Q}$ denotes the rational numbers, $J=[1,2]$, $A=[1-\alpha^{*},\alpha^{*}], \frac{1}{2} <\alpha^{*} <1$, and for a subset $B$ of $\mathbb{R}$ we define $B_{\mathbb{Q}}= B \cap \mathbb{Q}$. 

\subsection{Some Lemmas and a Proposition}

The first lemma shows that if a process is scalable, then certain information on the interval $J$ yields information on the interval $[0,\delta]$. We phrase this in slightly more general terms, but ultimately it will be applied to quantile processes.

\begin{lem}\label{simplified homogeneous} Let $W$ be $H$-scalable. Assume that for some $q>0$ one has $\Ex \sup_{u\in J_{\mathbb{Q}},\alpha\in A_{\mathbb{Q}}}|W(u,\alpha)|^{q}<\infty$. Then, for every $0<\delta\in \mathbb{Q}$  
\begin{equation}
\E(\sup_{ u \in (0,\delta]_{\mathbb{Q}}, \alpha\in A_{\mathbb{Q}}}|W(u, \alpha)|^q])\le \dfrac{\delta^{Hq}}{1-2^{-Hq}}\E(\sup_{ u \in J_{\mathbb{Q}}, \alpha\in A_{\mathbb{Q}}}|W(u, \alpha)|^q]).
\end{equation}
\end{lem}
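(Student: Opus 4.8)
The plan is to exploit the $H$-scalability of $W$ to partition the interval $(0,\delta]$ into the dyadic blocks $(2^{-(k+1)}\delta, 2^{-k}\delta]$ for $k\ge 0$, control the supremum over each block by a rescaled copy of the supremum over $J=[1,2]$, and then sum a geometric series. First I would note that for any $c>0$, $H$-scalability gives that $\{W(ct,\alpha):t\ge0,\alpha\in A\}$ and $\{c^H W(t,\alpha):t\ge0,\alpha\in A\}$ have the same finite-dimensional distributions; since the relevant suprema are taken over the countable sets $J_{\mathbb Q}$ and $A_{\mathbb Q}$ (and over countable dyadic-type index sets), a supremum over such a countable set is a pointwise-increasing limit of maxima over finite subsets, so equality of finite-dimensional distributions upgrades to equality in distribution of these countable suprema. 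Concretely, for each fixed $k\ge 0$ and with $c=2^{-k}\delta$, the change of variables $u = c v$ maps $v\in[1,2]$ onto $u\in(2^{-(k+1)}\delta,2^{-k}\delta]$ (well, $u\in[2^{-k}\delta,2^{-(k+1)}\delta\cdot 2]=[c,2c]$, which covers the half-open block $(2^{-(k+1)}\delta,2^{-k}\delta]$), and rationality is preserved because $\delta\in\mathbb Q$ and $2^{-k}\in\mathbb Q$.

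The key inequality chain then runs as follows. Write
\[
\sup_{u\in(0,\delta]_{\mathbb Q},\,\alpha\in A_{\mathbb Q}}|W(u,\alpha)|^q \;=\; \sup_{k\ge 0}\ \sup_{u\in(2^{-(k+1)}\delta,\,2^{-k}\delta]_{\mathbb Q},\,\alpha\in A_{\mathbb Q}}|W(u,\alpha)|^q \;\le\; \sum_{k\ge 0}\ \sup_{u\in[2^{-k}\delta,\,2^{-k+1}\delta]_{\mathbb Q},\,\alpha\in A_{\mathbb Q}}|W(u,\alpha)|^q,
\]
using that the dyadic blocks cover $(0,\delta]$ and that a supremum over a union is at most the sum of the suprema. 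Taking expectations and using scalability blockwise, for each $k$ the random variable $\sup_{u\in[2^{-k}\delta,2^{-k+1}\delta]_{\mathbb Q},\alpha\in A_{\mathbb Q}}|W(u,\alpha)|^q$ has the same distribution as $(2^{-k}\delta)^{Hq}\sup_{v\in J_{\mathbb Q},\alpha\in A_{\mathbb Q}}|W(v,\alpha)|^q$, hence the same expectation. Therefore
\[
\E\Bigl(\sup_{u\in(0,\delta]_{\mathbb Q},\,\alpha\in A_{\mathbb Q}}|W(u,\alpha)|^q\Bigr)\;\le\;\sum_{k\ge 0}(2^{-k}\delta)^{Hq}\,\E\Bigl(\sup_{v\in J_{\mathbb Q},\,\alpha\in A_{\mathbb Q}}|W(v,\alpha)|^q\Bigr)\;=\;\frac{\delta^{Hq}}{1-2^{-Hq}}\,\E\Bigl(\sup_{v\in J_{\mathbb Q},\,\alpha\in A_{\mathbb Q}}|W(v,\alpha)|^q\Bigr),
\]
since $\sum_{k\ge 0}2^{-kHq}=1/(1-2^{-Hq})$ converges because $Hq>0$. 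The finiteness hypothesis on the right-hand side makes all of this meaningful, and monotone convergence justifies interchanging the expectation with the sum/supremum over the countable index sets.

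The main obstacle, and the only point requiring genuine care, is the passage from equality of finite-dimensional distributions (which is all that $H$-scalability literally provides) to equality in law of the \emph{supremum} of $|W(\cdot,\cdot)|^q$ over a dyadic block versus the corresponding rescaled supremum over $J$; this is where restricting to the countable rational index sets $J_{\mathbb Q}$, $A_{\mathbb Q}$ is essential, since then each supremum is an increasing limit of maxima over finite subsets, finite-dimensional equality transfers to each such max, and the limits agree. One should also check the bookkeeping that the rational dyadic blocks genuinely cover $(0,\delta]_{\mathbb Q}$ and that the scaling constant $c=2^{-k}\delta$ keeps rationals rational and maps $[1,2]_{\mathbb Q}$ onto $[c,2c]_{\mathbb Q}\supseteq(2^{-(k+1)}\delta,2^{-k}\delta]_{\mathbb Q}$; these are routine once $\delta\in\mathbb Q$ is used. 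Everything else is the geometric summation already displayed.
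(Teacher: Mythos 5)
Your argument is correct and is essentially the paper's own proof: decompose $(0,\delta]$ into dyadic blocks, use $H$-scalability (transferred to countable suprema via monotone limits of finite maxima) to equate each block's expected supremum with a rescaled copy of the one over $J_{\mathbb{Q}}$, and sum the geometric series. The only blemish is the garbled parenthetical identifying $[c,2c]$ with $c=2^{-k}\delta$ as covering $(2^{-(k+1)}\delta,2^{-k}\delta]$ (it does not; the correct scale for that block is $c=2^{-(k+1)}\delta$), but your displayed inequality chain uses the blocks $[2^{-k}\delta,2^{-k+1}\delta]$ consistently, so the bound and the constant $\delta^{Hq}/(1-2^{-Hq})$ come out exactly as claimed.
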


\begin{proof}
\begin{align*}
&\E\sup_{ s \in (0,\delta]_{\mathbb{Q}}, \alpha\in A_{\mathbb{Q}}}|W(s, \alpha)|^q]=\E \sup_{j\ge 1}\sup_{s\in (2^{-j}\delta, 2^{-(j-1)}\delta]_{\mathbb{Q}}, \alpha\in A_{\mathbb{Q}}}|W(s,\alpha)|^{q}\\
&\le\sum_{j=1}^{\infty}\ \E \sup_{s\in (2^{-j}\delta, 2^{-(j-1)}\delta]_{\mathbb{Q}}, \alpha\in A_{\mathbb{Q}}}|W(s,\alpha)|^{q}
=\sum_{j=1}^{\infty}\ \E \sup_{s\in (1,2]_{\mathbb{Q}}, \alpha\in A_{\mathbb{Q}}}|W(2^{-j}\delta s,\alpha)|^{q}\\
&=\sum_{j=1}^{\infty} (2^{-j}\delta)^{Hq}\ \E\sup_{s\in (1,2]_{\mathbb{Q}}, \alpha\in A_{\mathbb{Q}}}|W(s,\alpha)|^{q}=\frac{\delta^{Hq}}{1-2^{-Hq}}\E(\sup_{ u \in J_{\mathbb{Q}}, \alpha\in A_{\mathbb{Q}}}|W(u, \alpha)|^q]). 
\end{align*}
\end{proof}

Next we show that under the self-similarity assumption of Theorem \ref{scale-quant} we can apply Lemma \ref{simplified homogeneous} to  the sequence of empirical quantile processes 
\begin{equation}\label{emp-quant-processes-1}
W_{n}(t,\alpha)=\sqrt{n}\bigl(F^{-1}_{n,t}(\alpha)-F^{-1}_{t}(\alpha)\bigr),
\end{equation}
where $t\ge 0, \alpha\in(0,1)$. 
The bounds we obtain depend only on the scalability constant, $H$, and are uniform in $n$. 
For this purpose the next  lemma is also useful. 

\begin{lem}\label{quantile comparison} Let $X$ be an arbitrary random variable. If $q_{\alpha}(X)$ denotes any $\alpha$-quantile for $X$, then $-q_{1-\alpha}(-X)$ is also an $\alpha$-quantile for $X$. 
\end{lem}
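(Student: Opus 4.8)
The plan is to unwind the definitions of quantiles directly. Recall that for an arbitrary random variable $X$, an $\alpha$-quantile is any real number $x$ satisfying $P(X < x) \le \alpha \le P(X \le x)$; equivalently, writing $F(x) = P(X \le x)$ and $F^-(x) = P(X < x)$, we need $F^-(x) \le \alpha$ and $F(x) \ge \alpha$. The statement to prove is that if $q_\alpha(X)$ is such a number (for level $\alpha$ applied to $X$), then $x := -q_{1-\alpha}(-X)$ satisfies the two defining inequalities for an $\alpha$-quantile of $X$.

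First I would record the hypothesis: with $y := q_{1-\alpha}(-X)$, we have $P(-X < y) \le 1-\alpha$ and $P(-X \le y) \ge 1-\alpha$. Next I would translate each event for $-X$ into an event for $X$: $\{-X < y\} = \{X > -y\}$ and $\{-X \le y\} = \{X \ge -y\}$. Taking complements and using $P(\text{complement}) = 1 - P(\cdot)$, the inequality $P(X > -y) \le 1-\alpha$ becomes $P(X \le -y) \ge \alpha$, and $P(X \ge -y) \ge 1-\alpha$ becomes $P(X < -y) \le \alpha$. Setting $x = -y$, these are exactly $F^-(x) \le \alpha \le F(x)$, which is the definition of $x$ being an $\alpha$-quantile of $X$. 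Hence $-q_{1-\alpha}(-X)$ is an $\alpha$-quantile for $X$.

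The only subtlety worth a sentence of care is the handedness of the strict versus non-strict inequalities: one must be consistent about whether $\{-X < y\}$ or $\{-X \le y\}$ is being used, since $P(-X < y) = P(X > -y) = 1 - P(X \le -y)$ but $P(-X \le y) = P(X \ge -y) = 1 - P(X < -y)$. As long as one matches "strict for $-X$" with "non-strict for $X$" correctly, the argument is a two-line computation; there is no real obstacle. I would present it as a short direct verification rather than invoking any machinery, and it is convenient to state at the outset the characterization "$x$ is an $\alpha$-quantile of $X$ iff $P(X<x)\le\alpha\le P(X\le x)$" so the bookkeeping is transparent. (Note that this also shows the set of $\alpha$-quantiles of $X$ equals $-1$ times the set of $(1-\alpha)$-quantiles of $-X$, though only the one-sided statement is needed here, and in the symmetric-density setting of (A-2) it pins down $\tau_{1/2}(t)=0$.)
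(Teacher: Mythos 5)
Your proof is correct and follows essentially the same route as the paper's: both verify the two defining inequalities for an $\alpha$-quantile by rewriting the events $\{-X<y\}$ and $\{-X\le y\}$ as $\{X>-y\}$ and $\{X\ge -y\}$ and complementing. Your version is slightly more carefully bookkept (the paper's second displayed inequality reads ``$\le\alpha$'' where the quantile property actually delivers ``$\ge\alpha$,'' an apparent sign slip that your formulation avoids), but the substance is identical.
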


\begin{proof} 
\begin{align*}
&P(X\ge -q_{1-\alpha}(-X))=P(-X\le q_{1-\alpha}(-X))\ge 1-\alpha\\
\intertext{and}
&P(X\le -q_{1-\alpha}(-X))=P(-X\ge q_{1-\alpha}(-X))\le \alpha. \qquad \qed
\end{align*}
\renewcommand{\qed}{}\end{proof}

\begin{prop}\label{tail bounds}
Let $\{X(t)\colon t\ge 0\}$ be a symmetric $H$-self-similar process such that (A-2), (A-3), and (A-5) hold, and assume the empirical quantile processes
are built from i.i.d. copies of $\{X(t): t \ge 0\}$. Then, there exists $n_{0}\ge 1$ such that 
for every  $\epsilon>0$ there exists $\delta>0$ with
\begin{equation}\label{emp-quant-processes-2}
\sup_{n \geq n_0} P(\sup_{t \in [0, \delta]_ Q, \alpha \in A_{\mathbb{Q}}} \sqrt n |F_{n,t}^{-1}(\alpha)-F_{t}^{-1}(\alpha)|> \epsilon) \leq \epsilon.
\end{equation}
If we also assume (A-1), 
then there exists an integer $n_0 \ge 1$ such that for every $\epsilon>0$ there is  a $\delta>0$ satisfying
\begin{equation}\label{cont-zero-1}
\sup_{ n \ge n_0}P(\sup_{t \in [0,\delta], \alpha \in A} \sqrt n |F_{n,t}^{-1}(\alpha)-F_{t}^{-1}(\alpha)| >\epsilon)\le \epsilon. 
\end{equation} 
\end{prop}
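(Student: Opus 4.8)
The plan is to prove the two estimates in Proposition~\ref{tail bounds} by reducing them to a single probabilistic tail bound on the empirical quantile process over the reference interval $J=[1,2]$, which then gets transported down to $[0,\delta]$ via scalability of the empirical quantile processes. The starting point is the remark after (A-4) that self-similarity of $X$ makes the empirical quantile processes $W_n(t,\alpha)=\sqrt n(F_{n,t}^{-1}(\alpha)-F_t^{-1}(\alpha))$ scalable in $t$ with index $H$ (more precisely, for fixed $n$ the finite-dimensional distributions of $\{W_n(ct,\alpha)\}$ and $\{c^H W_n(t,\alpha)\}$ agree, because $X_j(c\,\cdot)\stackrel{d}{=}c^H X_j(\cdot)$ jointly over $j$, hence $F_{n,ct}^{-1}(\alpha)=c^H F_{n,t}^{-1}(\alpha)$ and likewise for $F$). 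So the scaling computation in Lemma~\ref{simplified homogeneous} applies verbatim once we know a finite $q$-th moment bound for $\sup_{J_{\mathbb Q}\times A_{\mathbb Q}}|W_n|$ that is \emph{uniform in $n\ge n_0$}.

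\textbf{Step 1: a uniform moment bound over $J$.} The core analytic input is to produce $q>0$, $n_0\ge1$, and $M<\infty$ with
\[
\sup_{n\ge n_0}\E\Big(\sup_{u\in J_{\mathbb Q},\,\alpha\in A_{\mathbb Q}}|W_n(u,\alpha)|^q\Big)\le M.
\]
Here (A-3), the empirical CLT over $\mathcal C$ on $J$, together with (A-2) (positive, symmetric, unimodal densities, which give a positive lower bound $c_{A,\theta}>0$ for $f(t,x)$ in a $\theta$-neighborhood of the quantiles $\tau_\alpha(t)$, $t\in J$, $\alpha\in A$, since $J$ is compact and $\tau_\alpha(t)=t^H\tau_\alpha(1)$ is bounded), is exactly the setting in which the empirical quantile process is uniformly close to $-\nu_n(t,\tau_\alpha(t))/f(t,\tau_\alpha(t))$ — this is the Vervaat/Bahadur-type argument underlying \cite{kz-quant} and (A-4). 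The weak convergence in (A-4) gives asymptotic tightness of $\{\sup_{J\times I}|W_n|\}$, i.e. for each $\eta>0$ a $K$ with $\limsup_n P(\sup|W_n|>K)<\eta$; bootstrapping this to a uniform $q$-th moment bound requires a uniform tail estimate. This is where (A-5), $P(\sup_{t\in J_Q}|X(t)|>u)\le c_\theta u^{-\theta}$, enters: it controls the $\sqrt n$-scaled deviations of the empirical quantiles near the edges of $A$ and keeps the quantiles from escaping, so that a truncation argument yields $P(\sup_{J_{\mathbb Q}\times A_{\mathbb Q}}|W_n|>u)\le C u^{-q'}$ for $u$ large, uniformly in $n\ge n_0$, for some $q'$ determined by $\theta$; integrating gives the moment bound for any $q<q'$.

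\textbf{Step 2: transport to $[0,\delta]$ and conclude.} Given Step 1, Lemma~\ref{simplified homogeneous} (applied to each $W_n$, whose scalability constant $H$ is $n$-independent) yields
\[
\sup_{n\ge n_0}\E\Big(\sup_{u\in(0,\delta]_{\mathbb Q},\,\alpha\in A_{\mathbb Q}}|W_n(u,\alpha)|^q\Big)\le \frac{\delta^{Hq}}{1-2^{-Hq}}\,M\;\longrightarrow\;0\quad(\delta\to0),
\]
and Markov's inequality turns this into (\ref{emp-quant-processes-2}): choose $\delta$ so small that $\ve^{-q}\,\delta^{Hq}(1-2^{-Hq})^{-1}M\le\ve$. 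For the second assertion (\ref{cont-zero-1}) we must pass from the countable index sets $[0,\delta]_Q\times A_{\mathbb Q}$ to the full $[0,\delta]\times A$. Here I would invoke (A-1): cadlag paths of the $X_j$ force $F_{n,t}^{-1}(\alpha)$ and $F_t^{-1}(\alpha)=t^H\tau_\alpha(1)$ to be suitably regular in $(t,\alpha)$ — in particular $t\mapsto F_t^{-1}(\alpha)$ is continuous away from $0$ and $\alpha\mapsto F_{n,t}^{-1}(\alpha)$, $\alpha\mapsto F_t^{-1}(\alpha)$ are monotone — so that the supremum over $[0,\delta]\times A$ is dominated by a supremum over the dense rational grid $[0,\delta]_Q\times A_{\mathbb Q}$ (a standard right/left-continuity plus monotonicity argument, splitting $[0,\delta]$ at the jump times of the finitely many sample paths involved, all of which have finitely many jumps exceeding any given size). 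Applying (\ref{emp-quant-processes-2}) then gives (\ref{cont-zero-1}) with the same $\delta$ (or a slightly smaller one).

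\textbf{The main obstacle} is Step~1: establishing the $n$-uniform $q$-th moment (equivalently, uniform polynomial tail) bound for $\sup_{J_{\mathbb Q}\times A_{\mathbb Q}}|W_n|$. Weak convergence alone gives only asymptotic tightness, not moment control, and the empirical quantile process is an inverse, so its large deviations near the endpoints $1-\alpha^*$ and $\alpha^*$ of $A$ are genuinely governed by the tails of $X(t)$ — which is precisely why (A-5) is imposed and why the paper notes that ``similar expectations do not exist for stable processes'' at the level of the process itself but do exist for its quantiles. Making the truncation/inversion argument that converts the power-law bound (A-5) on $\sup_{J_Q}|X|$ into a power-law bound on $\sup_{J_{\mathbb Q}\times A_{\mathbb Q}}|W_n|$, uniformly in $n\ge n_0$, is the technical heart of the proposition; once it is in hand, the rest is scaling (Lemma~\ref{simplified homogeneous}) and routine regularity bookkeeping.
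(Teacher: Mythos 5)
Your outline has the right skeleton --- a uniform-in-$n$ moment bound over $J\times A$, transport to $[0,\delta]$ via Lemma~\ref{simplified homogeneous}, Chebyshev, and then a countable-to-uncountable reduction using path regularity --- and this is exactly the architecture of the paper's proof. But the step you yourself flag as ``the technical heart'' (Step 1) is not actually carried out, and as written it contains a genuine gap. First, you invoke (A-4) to get asymptotic tightness of $\sup_{J\times I}|W_n|$, but (A-4) is not among the hypotheses of the proposition (only (A-2), (A-3), (A-5), and later (A-1) are assumed); and in any case, as you concede, tightness gives no moment control. What is missing is the actual mechanism that produces a tail bound $P(\sup_{J_{\mathbb Q}\times A_{\mathbb Q}}|W_n|>u)\le Cu^{-q}$ uniformly in $n\ge n_0$, and saying ``a truncation argument yields'' it is not a proof.

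The paper fills this in with a two-regime argument that you should be able to reconstruct. For $0\le u\le C\sqrt n$, the event $\{\sup_{J_{\mathbb Q}\times A_{\mathbb Q}}W_n>u\}$ is rewritten as an event about $\sum_j(I_{X_j(t)>\zeta}-P(X(t)>\zeta))$ exceeding $nP(\eta<X(t)\le\zeta)$ with $\eta=F_t^{-1}(\alpha)$, $\zeta=\eta+u/\sqrt n$; assumption (A-2) plus self-similarity gives the lower bound $P(\eta<X(t)\le\zeta)\ge f(1,F_1^{-1}(\alpha^*)+C)\,2^{-H}u/\sqrt n$, so the quantile deviation is dominated by the sup of the empirical process over $J_{\mathbb Q}\times\mathbb{Q}$ exceeding $Du$. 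Then (A-3) gives stochastic boundedness of that normalized sup, and the Hoffmann-J{\o}rgensen inequality upgrades this to $\sup_n\E\|\cdot\|^q=B_q<\infty$, whence $P(\cdot>u)\le B_q(Du)^{-q}$. For $u\ge C\sqrt n$ one cannot use the density bound; instead the paper uses a combinatorial estimate: the empirical $\alpha$-quantile exceeding $\zeta$ forces at least $\lfloor n(1-\alpha^*)\rfloor$ of the independent paths to satisfy $\|X_j\|_{J_{\mathbb Q}}>u/\sqrt n-F_1^{-1}(\alpha^*)$, and $\binom{n}{\lfloor n(1-\alpha^*)\rfloor}\le(e/(1-\alpha^*))^{\lfloor n(1-\alpha^*)\rfloor}$ together with (A-5) gives a bound $[\lambda_\theta\sqrt n/u]^{\theta\lfloor n(1-\alpha^*)\rfloor}\le u^{-2}$ for $n\ge n_0$ and $u/\sqrt n\ge C:=2\lambda_\theta\vee 2F_1^{-1}(\alpha^*)$. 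A symmetry lemma ($-q_{1-\alpha}(-X)$ is an $\alpha$-quantile of $X$) reduces the two-sided sup to the one-sided one. Integrating the combined tail bound gives the uniform first moment, and the rest of your argument then goes through. Without this two-regime estimate the proposition is not proved.
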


\begin{proof} First we note that the $H$-self-similarity of each of the i.i.d. processes, $\{X_{j}(t)\},$ immediately implies scalability with the same $H$ for each of the processes 
\[\{W_n(t,\alpha)= \sqrt n(F_{n,t}^{-1}(\alpha)-F_{t}^{-1}(\alpha))\colon t\in [0,\infty), \alpha\in (0,1)\}.
\] 
We'll obtain bounds on 
\begin{equation}\label{emp-quant-processes-3}
P(\sup_{t\in J_{\mathbb{Q}},\alpha\in A_{\mathbb{Q}}} \sqrt n|F_{n,t}^{-1}(\alpha)-F_{t}^{-1}(\alpha)|>u),
\end{equation}
strong enough to yield an $n_{0}$ for which 
\begin{equation}
\sup_{n \ge n_0} \E[\sup_{t\in J_{\mathbb{Q}}, \alpha\in A_{\mathbb{Q}}} \sqrt n |F_{n,t}^{-1}(\alpha)-F_{t}^{-1}(\alpha)|] <\infty. \notag
\end{equation}
At this point for any $q \in (0,1]$ and $n \ge n_0$, we can apply Lemma \ref{simplified homogeneous} to obtain the bound
\begin{align}\label{emp-quant-processes-4}
\E(\sup_{ t \in (0,\delta]_{\mathbb{Q}}, \alpha\in A_{\mathbb{Q}}}&|W_n(t,\alpha)|^{q})\le \dfrac{\delta^{Hq}}{1-2^{-Hq}}
&\E(\sup_{ t \in J_{\mathbb{Q}}, \alpha\in A_{\mathbb{Q}}}|W_n(t,\alpha)|^{q})<\infty.
\end{align}
An application of Chebyschev's inequality and $\delta=\delta(\epsilon)$ sufficiently small will then yield (\ref{emp-quant-processes-2}), which proves the proposition. 
\par

We break the proof into two parts. The first part covers the case when we have a lower bound on the densities of $X_{t}$ for $t\in J=[1,2]$. In the second part we take care of the remaining case. For notational convenience let $\eta=F_{t}^{-1}(\alpha), \zeta=\dfrac{u}{\sqrt{n}}+\eta$.
Now for $u\ge 0$ we have
  \begin{align}&P(\sup_{t\in J_{\mathbb{Q}}, \alpha\in A_{\mathbb{Q}}}W_n(t,\alpha)>u)
\!=\!P(\exists (t,\alpha) \!\!\in J_{\mathbb{Q}}\times A_{\mathbb{Q}}, \!\sum_{j=1}^{n}I_{X_{j}(t)>\zeta}\ge n(1-\alpha))\!=\!\notag\\
&P(\exists (t,\alpha) \!\in \!J_{\mathbb{Q}}\!\times \!A_{\mathbb{Q}},  \!\sum_{j=1}^{n}\bigl(I_{X_{j}(t)\!>\!\zeta}\!-\!P(X(t)\! >\!\zeta)\bigr)\!
\ge\! n\bigl[(\!1\!-\!\alpha)\!\! -\!\! P(X(t)>\zeta)\bigr])\notag\\
&=P(\exists (t,\alpha) \in J_{\mathbb{Q}}\times A_{\mathbb{Q}}, \sum_{j=1}^{n}\bigl(I_{X_{j}(t) >\zeta}\!-\!P(X(t)\! >\!\zeta)\bigr)\!\ge\! nP(\eta\!<\!X(t)\le \zeta)).\label{part 1}
\end{align}

Since  
(A-2) holds, for each $t>0$ the density $f(t,x)$ of $X(t)$ is symmetric about $0$, and  decreasing on $[0,\infty)$. Therefore, $\{X(t): t \ge 0\}$ being $H$-self-similar with $H \in (0,\infty)$, and $1\le  t \le 2$, imply
\begin{align*}{P}&(\eta<X(t)\le \zeta)=P(\eta t^{-H}\le X(1)\le \zeta t^{-H})\\
&=\int_{\eta t^{-H}}^{\zeta t^{-H}}f(1,x)dx
\ge \bigl(\inf_{\eta t^{-H}\le x\le \zeta t^{-H}}f(1,x)\bigr)(\zeta-\eta) t^{-H}\\
&\ge f(1, t^{-H} F_{t}^{-1}(\alpha^{*})+\dfrac{u}{\sqrt{n}})\ \dfrac{u}{\sqrt{n}} t^{-H}\ge f(1,F_{1}^{-1}(\alpha^{*})+\dfrac{u}{\sqrt{n}})\ 2^{-H}\dfrac{u}{\sqrt{n}}. 
\end{align*}
So, if  $0 \le \dfrac{u}{\sqrt{n}}\le C$, 
\begin{align*}
&\{\sqrt{n}\sup_{t\in J_{\mathbb{Q}}, \alpha\in A_{\mathbb{Q}}}\bigl(F_{n,t}^{-1}(\alpha)-F_{t}^{-1}(\alpha)\bigr)>u\}\\
&\subseteq \{\dfrac1{{n}}\|\sum_{j=1}^{n}\bigl(I_{X_{j}(t)>y}-P(X(t)>y)\bigr)\|_{J_{\mathbb{Q}}\times \mathbb{R}}
\ge  f(1,F_{1}^{-1}(\alpha^{*})+C)\ 2^{-H}\dfrac{u}{\sqrt{n}}\}.
\end{align*}
Now, for $t \in J_{\mathbb{Q}}$ fixed, the continuity of $P(X(t) >y)$ in $y$ and the right continuity of $I_{X_j(t) >y}$ in $y$ for $1 \le j \le n$  implies
\begin{align*}
&\frac{\|\sum_{j=1}^{n}\bigl(I_{X_{j}(t)>y}-P(X(t)>y)\bigr)\|_{J_{\mathbb{Q}}\times \mathbb{R}}}{\sqrt n}
=\frac{\|\sum_{j=1}^{n}\bigl(I_{X_{j}(t)>y}-P(X(t)>y)\bigr)\|_{J_{\mathbb{Q}}\times \mathbb{Q}}}{\sqrt n}
\end{align*}
with probability one.
Hence, if $D:= 2^{-H}f(1,F_{1}^{-1}(\alpha^{*})+C)$, we have for $0 \le u\le C\ \sqrt{n}$, 
\begin{align}\label{emp upper bound}P&(\sqrt{n}\sup_{t\in J_{\mathbb{Q}}, \alpha\in A_{\mathbb{Q}}}\bigl(F_{n,t}^{-1}(\alpha)-F_{t}^{-1}(\alpha)\bigr)>u)\notag\\
&\le P(\dfrac1{\sqrt{n}}\|\sum_{j=1}^{n}\bigl(I_{X_{j}(t)>y}-P(X(t)>y)\bigr)\|_{J_{\mathbb{Q}}\times \mathbb{Q}}\ge  Du)
\end{align}
Since the summands, $I_{X_{j}(t)>y}-P(X(t)>y)$ are bounded by $1$, and the CLT  over $\mathcal{C}$ on $J$ given by assumption (A-3) implies stochastic boundedness of the normalized norm in (\ref{emp upper bound}), we can use a result of  Hoffman-J\o rgensen, see pp. 164-5 of  \cite{hoff}, 
to obtain for any $q>0$, 
\[B_{q}:=\sup_{n}\E \dfrac1{\sqrt{n}}\|\sum_{j=1}^{n}\bigl(I_{X_{j}(t)>y}-P(X(t)>y)\bigr)\|_{J_{\mathbb{Q}}\times \mathbb{Q}}^{q}<\infty.
\]
Therefore, for $0 \le  u \le C\, \sqrt n,$
\begin{align}\label{small values}P&(\sqrt{n}\sup_{t\in J_{\mathbb{Q}}, \alpha\in A_{\mathbb{Q}}}\bigl(F_{n,t}^{-1}(\alpha)-F_{t}^{-1}(\alpha)\bigr)>u)\le B_{q}\dfrac{1}{(Du)^{q}}.
\end{align}

Now we deal with the case $u\ge C\sqrt{n}$. 
In the  computation below we don't use the particular form of the quantiles, $F_{n,t}^{-1}(\alpha), F_{t}^{-1}(\alpha)$, only the fact that they are quantiles. Note (\ref{other quantile}) below. 
Hence, by Lemma \ref{quantile comparison} 
\begin{align}
&P(\sqrt{n}\sup_{t\in J_{\mathbb{Q}},\alpha\in A}|F_{n,t}^{-1}(\alpha)-F_{t}^{-1}(\alpha)|>u)\notag\\
\le~ &P(\sqrt{n}\sup_{t\in J_{\mathbb{Q}},\alpha\in A_{\mathbb{Q}}}\bigl(F_{n,t}^{-1}(\alpha)-F_{t}^{-1}(\alpha)\bigr)>u)\notag\\
+~ &P(\sqrt{n}\sup_{t\in J_{\mathbb{Q}},\alpha\in A_{\mathbb{Q}}}\bigl(F_{t}^{-1}(\alpha)-F_{n,t}^{-1}(\alpha)\bigr)>u)\notag\\
 =~ &P(\sqrt{n}\sup_{t\in J_{\mathbb{Q}},\alpha\in A_{\mathbb{Q}}}\bigl(F_{n,t}^{-1}(\alpha)-F_{t}^{-1}(\alpha)\bigr)>u)\notag\\
 +~ &P(\sqrt{n}\sup_{t\in J_{\mathbb{Q}},\alpha\in A_{\mathbb{Q}}}\bigl(-q_{t}(1-\alpha)+q_{n,t}(1-\alpha)\bigr)>u).\label{other quantile}
 \end{align} 
Thus, the second term can be treated the same as the first term. 
For the first term we have 
\begin{align}\label{minmax}P&(\sqrt{n}\sup_{t\in J_{\mathbb{Q}},\alpha\in A_{\mathbb{Q}}}\bigl(F_{n,t}^{-1}(\alpha)-F_{t}^{-1}(\alpha)\bigr)>u)\notag\\
&=P(\exists t\in J_{\mathbb{Q}},\alpha\in A_{\mathbb{Q}}, F_{n,t}^{-1}(\alpha)>\zeta)\notag\\
&\le P(\exists t\in J_{\mathbb{Q}}, \alpha\in A_{\mathbb{Q}}, \exists I, \#I= \lfloor n(1-\alpha)\rfloor, X_{j}(t)\notag\\
&\quad >\zeta, \forall j\in I)\notag\\
&\le P(\exists t\in J_{\mathbb{Q}}, \alpha\in A_{\mathbb{Q}}, \exists I, \#I= \lfloor n(1-\alpha^{*}) \rfloor, X_{j}(t)\notag\\
&\quad >\zeta, \forall j\in I),\notag
\end{align}
and again by Lemma \ref{quantile comparison}, since  $F_{t}^{-1}(\alpha)\ge F_{t}^{-1}(1-\alpha^{*})~ \text{for all}~ \alpha \in A_{\mathbb{Q}}$,
\begin{align}
&\le \binom{n}{\lfloor n(1-\alpha^{*})\rfloor}P(\exists t\in J_{\mathbb{Q}}, X_{j}(t)>\dfrac{u}{\sqrt{n}}-F_{t}^{-1}(\alpha^{*}),\notag\\
&\quad  j=1,\ldots, \lfloor n(1-\alpha^{*})\rfloor) \notag\\
&\le\binom{n}{\lfloor n(1-\alpha^{*})\rfloor}P(\min_{j\le \lfloor n(1-\alpha^{*})\rfloor }\|X_{j}\|_{J_{\mathbb{Q}}}>\dfrac{u}{\sqrt{n}}-F_{1}^{-1}(\alpha^{*}))\notag\\
&\le\binom{n}{\lfloor n(1-\alpha^{*})\rfloor}\bigl[P(\|X\|_{J_{\mathbb{Q}}}>\dfrac{u}{\sqrt{n}}-F_{1}^{-1}(\alpha^{*}))\bigr]^{\lfloor n(1-\alpha^{*})\rfloor }\notag\\
&\le \bigl[\dfrac{e}{1-\alpha^{*}}P(\|X\|_{J_{\mathbb{Q}}}>\dfrac{u}{\sqrt{n}}-F_{1}^{-1}(\alpha^{*}))\bigr]^{\lfloor n(1-\alpha^{*})\rfloor}.
\end{align}
The last inequality in (\ref{minmax}) follows  since $r_n=\lfloor (1-\alpha^{*})n\rfloor$ and $ n\ge \frac{1}{1-\alpha^{*}}$ implies
$$
 \binom{n}{r_n} \le \big(\frac{e}{1-  \alpha^{*}}\big)^{r_n}.
$$
This can be seen
by observing that 
$$
\ln r! \ge \int_1^r \ln xdx= r \ln r -r +1, 
$$
and  $e^{\frac{1}{r}} r \ge r+1$ for all $r \ge 1,$ which together imply
$$
 r! \ge \big(\frac{e^{\frac{1}{r}} r}{e}\big)^r \ge \big(\frac{r+1}{e}\big)^r.
$$ 
Therefore, since $r_n \ge 1$ we have 
$$
\binom{n}{r_n} \le \frac{n^{r_n}}{r_n!}  \le \big(\frac{ne}{r_n+1} \big)^{r_n} \le \big(\frac{e}{1- \alpha^{*}} \big)^{r_n}.
$$

Now, by our tail assumption (A-5) we have by (\ref{minmax}), if $\dfrac{u}{\sqrt{n}}\ge C\ge 2F_{1}^{-1}(\alpha^{*})$ (see the definition of $C$ below), we have for $\lambda_{\theta}^{\theta}:=\dfrac{2^{\theta}e c_{\theta}}{1-\alpha^{*}}$, 
\begin{align}\label{final minmax}
&P(\sqrt{n}\sup_{t\in J_{\mathbb{Q}},\alpha\in A_{\mathbb{Q}}}\bigl(F_{n,t}^{-1}(\alpha)-F_{t}^{-1}(\alpha)\bigr)>u)\notag\\
&\le \bigl[\dfrac{e}{1-\alpha^{*}}P(\|X\|_{J_{\mathbb{Q}}}>\dfrac{u}{2\sqrt{n}})\bigr]^{\lfloor n(1-\alpha^{*})\rfloor}\notag\\
&\le\bigl[\dfrac{e}{1-\alpha^{*}}c_{\theta}(\dfrac{2\sqrt{n}}{u})^{\theta}\bigr]^{\lfloor n(1-\alpha^{*})\rfloor }=\bigl[\frac{\lambda_{\theta} \sqrt n}{u}\bigr]^{\theta \lfloor n(1-\alpha^{*})\rfloor }.
\end{align}

Therefore, taking
$u/\sqrt{n} \ge C \equiv 2\lambda_{\theta} \vee 2F_1^{-1}(\alpha^{*})$,  (\ref{final minmax}) implies
\begin{align}\label{final minmax-1}P&(\sqrt{n}\sup_{t\in J_{\mathbb{Q}},\alpha\in A_{\mathbb{Q}}}\bigl(F_{n,t}^{-1}(\alpha)-F_{t}^{-1}(\alpha)\bigr)>u)\le\bigl[\frac{\lambda_{\theta} \sqrt n}{u}\bigr]^{\theta \lfloor n(1-\alpha^{*})\rfloor }\notag\\
 &\phantom{*************} \le 
2^{-(\theta \lfloor n(1-\alpha^{*})\rfloor -2)}(\frac{\lambda_{\theta} \sqrt n}{u})^2,
\end{align}
and hence $n$ sufficiently large, say $n \ge n_0$, implies
\begin{align}\label{final minmax-2}P(\sqrt{n}\sup_{t\in J_{\mathbb{Q}},\alpha\in A_{\mathbb{Q}}}\bigl(F_{n,t}^{-1}(\alpha)-F_{t}^{-1}(\alpha)\bigr)>u)\le u^{-2}.
\end{align}

Since the same estimates apply to the second term in (\ref{other quantile}), we have by putting the two parts together that
\begin{align}
\E&[\sup_{t\in J_{\mathbb{Q}}, \alpha\in A_{\mathbb{Q}}} \sqrt n |F_{n,t}^{-1}(\alpha)-F_{t}^{-1}(\alpha)|]\notag\\
&\le 2 \int_{0}^{\infty}P(\sqrt{n}\sup_{t\in J_{\mathbb{Q}}, \alpha\in A_{\mathbb{Q}}}\bigl(F_{n,t}^{-1}(\alpha)-F_{t}^{-1}(\alpha)\bigr)>u)\, du\notag\\
&\le 2[1+\int_{1}^{C\sqrt{n}}P(\sqrt{n}\sup_{t\in J_{\mathbb{Q}}, \alpha\in A_{\mathbb{Q}}}\bigl(F_{n,t}^{-1}(\alpha)-F_{t}^{-1}(\alpha)\bigr)>u)\, du\notag\\
&\phantom{...........}+\int_{C\sqrt{n}}^{\infty}P(\sqrt{n}\sup_{t\in J_{\mathbb{Q}}, \alpha\in A_{\mathbb{Q}}}\bigl(F_{n,t}^{-1}(\alpha)-F_{t}^{-1}(\alpha)\bigr)>u)\, du]\notag\\
&\le 2[1+\dfrac{B_{2}}{D^{2}}\int_{1}^{C\sqrt{n}}\dfrac1{u^{2}}\, du +\int_{C\sqrt{n}}^{\infty}\dfrac1{u^{2}}\, du]  < \infty,
\end{align}
provided $n\ge n_0$ is sufficiently large, $B_2$ and $D$ are as in (\ref{small values}), and $C \equiv 2\lambda_{\theta} \vee 2F_1^{-1}(\alpha^{*})$. Thus the hypotheses in Lemma \ref{simplified homogeneous} are uniformly satisfied for $n \ge n_0$ and $q=1$. Hence, as indicated following  (\ref{emp-quant-processes-4}), (\ref{emp-quant-processes-2}) is proved and the first claim of the proposition is proved.

The second claim of the of the proposition in (\ref{cont-zero-1}) follows immediately from the next lemma. Hence the proposition is proved once this lemma  is established.
\end{proof}

The next lemma is important in that it allows us to switch back and forth between supremums over countable and uncountable parameter sets as in its application to Proposition 1. The sets $A$ and $A_{\mathbb{Q}}$ are as above, but $[0,T]_{\mathbb{Q}}$ also includes the point $T$, even if it is irrational. An extension of Lemma 3 follows by combining Theorem A and Lemma A in the comments at end of the paper. This refinement is due to the referee.

\begin{lem}\label{ctble-unctble-sups}
Let $X=\{X(t)\colon t\ge 0\}$ be a $H$-self-similar process with cadlag paths, $0<T< \infty$, and assume
the density, $f(1, \cdot),$ of $X(1)$ is strictly positive. Then, the  empirical quantile process $\tau_{\alpha}^n(t)$ built from i.i.d. copies of $\{X(t): t \ge 0\}$ with cadlag paths on a complete probability space has right continuous paths on $[0,T)$ with probability one, and is such that
\begin{equation}\label{ctble-unctble-0}
P(\sup_{t \in [0,T], \alpha \in A} |\tau_{\alpha}^n(t) - \tau_{\alpha}(t)| = \sup_{t \in [0,T]_{\mathbb{Q}}, \alpha \in A_{\mathbb{Q}}} |\tau_{\alpha}^n(t)) - \tau_{\alpha}(t)|)=1.
\end{equation} 
With probability one, $\tau_{\alpha}^n(t)$ also has left hand limits in $t \in (0,T]$, and if the input process $X$ is sample path continuous then $\tau_{\alpha}^n(t)$ is also path continuous in $t \in [0,T]$ with probability one.
Moreover, for each $t \in [0,T]$ and $n \geq 1,$ with probability one the empirical quantile process  $\tau_{\alpha}^n(t)$ is left continuous and has right limits in $\alpha \in (0,1)$.
\end{lem}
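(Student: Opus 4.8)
The plan is to prove the four assertions in sequence: (a) right-continuity of $t \mapsto \tau_\alpha^n(t)$ on $[0,T)$; (b) the almost-sure identity \eqref{ctble-unctble-0} between the countable and uncountable suprema; (c) existence of left-hand limits in $t$ (and continuity when $X$ has continuous paths); and (d) left-continuity with right limits of $\alpha \mapsto \tau_\alpha^n(t)$ on $(0,1)$. The unifying tool is the observation that, for fixed $\omega$, the empirical distribution function $F_{n,t}(x) = \frac{1}{n}\sum_{j=1}^n I_{\{X_j(t,\omega)\le x\}}$ is a step function in $x$ with jumps located at the order statistics of $X_1(t,\omega),\dots,X_n(t,\omega)$, and $\tau_\alpha^n(t) = F_{n,t}^{-1}(\alpha)$ is, for fixed $t$, piecewise constant and left-continuous in $\alpha$ with right limits — this is the classical fact giving (d) directly, for each fixed $t$, for every $\omega$. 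The subtlety in (d) is only that one wants this simultaneously for all $t$ on a single probability-one set; but since it holds for \emph{every} $\omega$, no exceptional set is needed.

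For (a) and (c) I would fix a rational level $\alpha$ and track how $\tau_\alpha^n(t)$ moves with $t$. Write $r_n(\alpha) = \lceil n\alpha \rceil$ (the index such that $\tau_\alpha^n(t)$ equals the $r_n(\alpha)$-th order statistic of $X_1(t),\dots,X_n(t)$ when there are no ties at that order statistic). The key point: if the input paths $X_j(\cdot,\omega)$ are cadlag, then so is $t \mapsto X_{(k)}(t,\omega)$, the $k$-th order statistic, \emph{provided} that at the relevant times no two of the $X_j$ coincide; and by strict positivity of $f(1,\cdot)$ together with $H$-self-similarity, $X_i(t)$ and $X_j(t)$ have a continuous joint law for $i\ne j$ and each $t>0$, so ties among distinct coordinates occur on a set of measure zero for each fixed $t>0$. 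The care required is that order statistics of cadlag functions are cadlag even \emph{with} ties at jump times, because left and right limits of each coordinate exist and the min/max operations preserve one-sided limits; ties are genuinely an issue only for the \emph{identification} of the quantile with a particular order statistic, not for the cadlag property of the quantile itself. So: $\tau_\alpha^n(\cdot,\omega)$ is a (measurable) selection among order-statistic functions, all cadlag, and one shows directly from the definition $\tau_\alpha^n(t)=\inf\{x: F_{n,t}(x)\ge\alpha\}$ that right-continuity in $t$ follows from right-continuity of each $X_j(\cdot,\omega)$. When $X$ has continuous paths, the same argument with ``cadlag'' replaced by ``continuous'' gives continuity of $\tau_\alpha^n(\cdot)$; the left-limit claim is symmetric.

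For (b), the identity \eqref{ctble-unctble-0}, I would argue that the map $(t,\alpha)\mapsto \tau_\alpha^n(t)-\tau_\alpha(t)$ is, on a probability-one set, right-continuous in $t\in[0,T)$, left-continuous in $\alpha$, and additionally has one-sided limits from the other directions, so that its sup over $[0,T]\times A$ is attained along a sequence with $t$ approached from the right through rationals and $\alpha$ approached from the left through rationals — here one uses that $\tau_\alpha(\cdot)$ inherits continuity in $t$ from self-similarity ($\tau_\alpha(t)=t^H\tau_\alpha(1)$ by (A-2)) and that $[0,T]_{\mathbb Q}$ is defined to include the right endpoint $T$. Concretely, given any $(t_0,\alpha_0)$ one picks rationals $t_k\downarrow t_0$ (or $t_k = T$ if $t_0=T$) and $\alpha_k\uparrow\alpha_0$ and uses the one-sided regularity just established to pass the value $|\tau_{\alpha_0}^n(t_0)-\tau_{\alpha_0}(t_0)|$ to a limit of values at $(t_k,\alpha_k)$, whence the uncountable sup is $\le$ the countable sup; the reverse inequality is trivial.

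\textbf{Main obstacle.} The delicate step is (c) together with the order-statistic bookkeeping underlying (a): one must be careful that passing to order statistics of cadlag paths does not destroy the cadlag property at times where two input paths cross or jump simultaneously, and that the $\inf$-definition of the empirical quantile is genuinely right-continuous in $t$ (rather than merely right-continuous in $x$ for fixed $t$). Handling the ties — showing they are negligible for the quantile-to-order-statistic identification for each fixed $t$, while not actually needed for the regularity claims — and doing this uniformly enough to get a single probability-one set valid for all $t\in[0,T]$ and all $\alpha\in(0,1)$, is where the real work lies; the strict positivity of $f(1,\cdot)$ and $H$-self-similarity are exactly what make the tie set a null set at each fixed time.
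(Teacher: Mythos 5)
Your overall architecture matches the paper's: identify $\tau_\alpha^n(t)$ with an order statistic of $X_1(t),\dots,X_n(t)$, transfer one-sided regularity in $t$ from the input paths to the order statistics, get the $\alpha$-regularity for free from the step-function structure of $F_{n,t}$, and then collapse the uncountable supremum to the countable one using left-continuity in $\alpha$ (with $\tau_\alpha(t)=t^{H}\tau_\alpha(1)$ continuous) and right-continuity in $t$ (with $T\in[0,T]_{\mathbb{Q}}$ by convention). The difficulty you flag as the ``main obstacle,'' however, is illusory, and the resolution you sketch for it would not work. The identification $\tau_\alpha^n(t)=X_{(j(\alpha))}(t)$ with $j(\alpha)=\min\{k: k/n\ge\alpha\}$ is a deterministic identity about any $n$ real numbers, ties or no ties: if $x_{(1)}\le\cdots\le x_{(n)}$ is the ordered list counted with multiplicity, then $F_{n,t}(x)\le (k-1)/n$ for $x<x_{(k)}$ and $F_{n,t}(x_{(k)})\ge k/n$, so $\inf\{x: F_{n,t}(x)\ge\alpha\}=x_{(k)}$ exactly for $\alpha\in((k-1)/n,\,k/n]$; no tie-breaking convention affects the values $x_{(k)}$, so no exceptional set is needed for the identification. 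By contrast, your proposed fix --- that $P(X_i(t)=X_j(t))=0$ for each fixed $t>0$ --- gives only a null set per $t$ and cannot be unioned over the uncountable set $[0,T]$, so if ties genuinely mattered your argument would indeed be stuck.

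What actually closes the regularity step in the paper is the min--max representation $x_{(k)}=\min_{\#J\ge k}\max_{i\in J}x_i$, which yields the sup-norm contraction $\max_{1\le k\le n}|x_{(k)}-y_{(k)}|\le\max_{1\le k\le n}|x_k-y_k|$ and hence, pathwise on the single full-measure event where all $n$ input paths are cadlag, $|\tau_\alpha^n(s)-\tau_\alpha^n(t)|\le\sup_{1\le j\le n}|X_j(s)-X_j(t)|$ simultaneously for all $s,t$ and all $\alpha$. Right-continuity, existence of left limits, and (for continuous inputs) continuity of $\tau_\alpha^n(\cdot)$ all follow at once from the corresponding properties of the $X_j$. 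You assert the analogous fact (``min/max operations preserve one-sided limits'') without the quantitative bound; supplying the contraction inequality is the one concrete ingredient your write-up is missing, and with it the proof goes through essentially as you outline.
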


\begin{proof} 
Since $\{X(t): t \ge 0\}$ is $H$-self-similar with $H>0$, we have $P(X(0)=0)=1,$ and for $t>0$ 
\begin{equation}\label{dist-scale}
F(t,x)=F(1,t^{-H}x),
\end{equation}
where $F(t,\cdot)$ is the distribution of $X(t)$. In addition, if $t>0$, then $X(t)$ has a strictly positive density since
\begin{equation}\label{density-scale}
f(t,x)=t^{-H}f(1,t^{-H}x),
\end{equation}
and
$F(t,x)$ is strictly increasing and  continuous in $ x \in \mathbb{R}$. Thus $\tau_{\alpha}(t) = F_t^{-1}(\alpha)$ is continuous in $\alpha \in (0,1)$, and by its definition, $ \tau_{\alpha}^n(t)= F_{n,t}^{-1}(\alpha)$ is left continuous with right limits in $\alpha \in (0,1)$. Moreover, for every $\alpha \in (0,1)$, $P(\tau_{\alpha}(0) = \tau_{\alpha}^n(0)=0)=1$ since  $P(X_j(0)=0)=1, j\ge1.$ Therefore, we have 
\begin{equation}\label{ctble-unctble-2}
P(\sup_{t \in [0,T], \alpha \in A} |\tau_{\alpha}^n(t) - \tau_{\alpha}(t)| = \sup_{t \in [0,T], \alpha \in A_{\mathbb{Q}}} |\tau_{\alpha}^n(t)) - \tau_{\alpha}(t)|)=1.
\end{equation} 

We also have $\tau_{\alpha}(\cdot)$ continuous in $t$ on $[0,\infty)$, since scaling easily implies $\tau_{\alpha}(t) = t^{H}\tau_{\alpha}(1)$ for all $t \ge 0$. Thus (\ref{ctble-unctble-0})
follows from (\ref{ctble-unctble-2}) provided we show that $\tau_{\alpha}^n(t)$ is right continuous on $[0,T)$ with probability one, i.e. we then would have
\begin{equation}\label{ctble-unctble-3}
P(\sup_{t \in [0,T], \alpha \in A_{\mathbb{Q}}} |\tau_{\alpha}^n(t) - \tau_{\alpha}(t)| = \sup_{t \in [0,T]_{\mathbb{Q}}, \alpha \in A_{\mathbb{Q}}} |\tau_{\alpha}^n(t)) - \tau_{\alpha}(t)|)=1.
\end{equation}

To verify the right continuity of $\tau_{\alpha}^n(\cdot)$, and (\ref{ctble-unctble-3}), we use the right continuity of the paths of the processes $X_1,\cdots,X_n$. We do this through the following observation.
That is, given real numbers $\{x_1,\cdots,x_n\}$, let $x_{(1)},\cdots,x_{(n)}$ be an ordering of these numbers such that $x_{(1)}\le \cdots \le x_{(n)}$. In case the numbers $\{x_1,\cdots,x_n\}$ are distinct, this ordering is unique, and when there are ties, we choose the ordering based on the priority of the original index among the tied numbers. Then, we refer to $x_{(1)},\cdots,x_{(n)}$ as the order statistics of $\{x_1,\cdots,x_n\}$, and for $k=1,\cdots,n$ we have
$$
x_{(k)}= \min_{J\in \mathcal{J}_k} \max_{i \in J}x_i,
$$
where $ \mathcal{J}_k$ denotes all subsets of $\{1,\cdots,n\}$ with $k$ or more elements. Furthermore, given $\{x_1,\cdots,x_n\}$ and $\{y_1,\cdots,y_n\}$, it follows that
\begin{equation}\label{ctble-unctble-4}
\max_{1 \le k \le n}|x_{(k)} -y_{(k)}| \le \max_{1 \le k \le n}|x_k-y_k|, 
\end{equation}
since for all $\delta\ge  \max_{1 \le k \le n}|x_k-y_k|$
$$
y_{(k)} - \delta = \min_{J\in \mathcal{J}_k}\max_{i\in J}y_{i}-\delta \le x_{(k)} = \min_{J \in \mathcal{J}_k}\max_{i\in J}x_{i}\le \min_{J\in \mathcal{J}_k}\max_{i\in J}y_{i}+\delta=y_{(k)}+\delta,
$$
which implies (\ref{ctble-unctble-4}) holds.

Since the i.i.d. processes $X_1,\cdots,X_n$ are cadlag on $[0,\infty)$ with probability one, there is a set $\Omega_1 \subseteq \Omega$ such that $P(\Omega_1)=1$ and for every $t \in [0,T), \epsilon>0,$ there is a $\delta=\delta(\omega,t,\epsilon,n)>0$ such that $\omega \in \Omega_1$ implies 
$$
\sup_{1 \leq j \leq n, t\le s \le (t+\delta) \wedge T} |X_j(s) - X_j(t)| \leq \epsilon.
$$
Therefore, (\ref{ctble-unctble-4}) implies the order statistics $X_{(1)}(s) \leq \cdots \leq X_{(n)}(s)$ and $X_{(1)}(t) \leq \cdots \leq X_{(n)}(t)$ obtained from $\{X_1(s),\cdots,X_n(s)\}$ and  $\{X_1(t),\cdots$, $X_n(t)\}$ are such that
\begin{equation}\label{ctble-unctble-5}
\sup_{1 \leq j \leq n, t\le s \le (t+\delta) \wedge T} |X_{(j)}(s) - X_{(j)}(t)| \leq \epsilon. 
\end{equation}
Since $\epsilon>0$ is arbitrary, we thus have that the order statistic processes $\{X_{(j)}(t): t \in [0,T)\}, j=1,\cdots,n,$ are right continuous on $\Omega_1$, and hence with probability one.

Now for $0<\alpha <1, n \geq 1, t \in [0,\infty)$ we have $\tau_{\alpha}^n(t) = \inf\{x: F_n(t,x) \geq \alpha\}$, and hence 
$$
\tau_{\alpha}^n(t) = X_{(j(\alpha))}(t), 
$$
where $j(\alpha)= \min\{ k: 1 \leq k \leq n, k/n  \geq \alpha\}$ is independent of $t \in E$. 
Thus for all $\omega \in \Omega_1$ 
we have $\tau_{\alpha}^n(t)$ right continuous in $t \in [0,T)$. 

To verify $\tau_{\alpha}^n(t)$ also has left hand limits on $(0,T]$ with probability one, and that its paths are cadlag, observe that for $s,t \in (0,T]$ the previous argument implies
$$
|\tau_{\alpha}^n(s) - \tau_{\alpha}^n(t)|= |X_{(j(\alpha))}(s)- X_{(j(\alpha))}(t)| \le\sup_{1 \le j \le n}|X_j(s)-X_j(t)|.
$$
Since the paths of each $X_j$ are cadlag  on $\Omega_1$, with $P(\Omega_1)=1$, for all $r \in (0,T]$ 
$$
\lim_{s \uparrow r} \sup_{s<t<r}|\tau_{\alpha}^n(s) - \tau_{\alpha}^n(t)|\le  \lim_{s\uparrow r} \sup_{s<t<r}\sup_{1 \le j \le n}|X_j(s)-X_j(t)|=0,
$$
which implies $\tau_{\alpha}^n(\cdot)$ has left hand limits on $\Omega_1$ at every point $r \in (0,T]$. The continuity properties claimed also now follow, and hence the lemma is proven.
\end{proof}

\subsection{Proof of Theorem \ref{scale-quant}} 

\begin{proof}
Since we are assuming (A-$i$) for $i=1,2,3,4,5,$ the conclusions of Lemmas 1,2,3, and Proposition 1,  all hold for the proof of Theorem \ref{scale-quant}. Hence, let
$$
W_n(t,\alpha)= \sqrt n((\tau_{\alpha}^n(t)- \tau_{\alpha}(t)), t \in [0,T], \alpha \in I,n \geq 1.
$$
Then, $P(W_n(0,\alpha)=0)=$1 for $\alpha \in I, n \ge 1,$ and for $t\in (0,T], \alpha \in I,$ well-known finite dimensional results imply the finite dimensional distributions of $W_n$ converge to the centered Gaussian distributions given by the covariance function in Remark \ref{G-cov} and (\ref{quant-CLT-near-zero-4}). Of course, this also holds by scaling and (A-4), but the covariance structure is determined by the convergence of the finite dimensional distributions, which is far less demanding than the assumption (A-4). Hence Theorem 1.5.4 and Theorem 1.5.6 of \cite{vw} combine to imply the quantile processes $\{W_n: n\ge 1\}$ satisfy the CLT in $\ell_{\infty}([0,T]\times I)$, where the limiting centered Gaussian process has  the covariance indicated,
provided for every $\epsilon >0, \eta>0 $ there is a partition
\begin{equation}\label{quant-CLT-near-zero-5}
[0,T] \times I= \cup_{i=1}^k E_i
\end{equation}
such that
\begin{equation}\label{quant-CLT-near-zero-6}
\limsup_{n \rightarrow \infty}P^{*}( \sup _{1 \le i \le k} \sup_{(t,\alpha),(s,\beta) \in E_i}|W_n(t,\alpha) - W_n(s,\beta)| > \epsilon) \le \eta,
\end{equation}
where $P^*(A)$ denotes the outer $P$-probability of an event $A$.
Since $I$ is a closed subinterval of $(0,1)$, there is an $\alpha^{*} \in (\frac{1}{2}, 1)$ such that $I \subseteq A= [1-\alpha^{*}, \alpha^{*}]$. For $\delta>0$ and $E_1=[0,\delta] \times I$ observe that
\begin{align*}
&P^{*}( \sup_{(t,\alpha),(s,\beta) \in E_1}|W_n(t,\alpha) - W_n(s,\beta)| >\frac{ \eta \wedge \epsilon}{2})\\
 \le~ &2P^{*}(\sup_{ s \in [0,\delta], \alpha \in I} |W_n(s,\alpha)| >  \frac{ \eta \wedge \epsilon}{4}).
\end{align*}
Hence, (\ref{cont-zero-1}) implies there is a $\delta=\delta(\frac{\eta \wedge \epsilon}{4})$ such that 
\begin{equation}\label{quant-CLT-near-zero-7}
\limsup_{n \rightarrow \infty} P^{*}( \sup_{(s,\alpha),(t,\beta) \in E_1}|W_n(s,\alpha) - W_n(t,\beta)| >\frac{ \eta \wedge \epsilon}{2}) \le 2(\frac{ \eta \wedge \epsilon}{4})\le \frac{\eta}{2}.
\end{equation}

Now (A-4) and scaling implies the CLT for $\{W_n(t,\alpha): (t, \alpha) \in [\delta,T]\times I\} $ in $\ell_{\infty}([\delta,T] \times I)$, and hence Theorem 1.5.4 of \cite{vw} implies that there is a partition  $[\delta, T]\times I= \cup_{i=2}^k E_i$ such that 
\begin{equation}\label{quant-CLT-near-zero-8}
\limsup_{n \rightarrow \infty}P^{*}( \sup _{2 \le i \le k} \sup_{(s,\alpha),(t,\beta) \in E_i}|W_n(s,\alpha) - W_n(t,\beta)| >\frac{ \eta \wedge \epsilon }{2}) \le \frac{ \eta \wedge \epsilon}{2}.
\end{equation}
Combining (\ref{quant-CLT-near-zero-7}) and (\ref{quant-CLT-near-zero-8}) we have (\ref{quant-CLT-near-zero-5}) and (\ref{quant-CLT-near-zero-6}), and hence the theorem is proved.
\end{proof}

\section{Proof of Theorem \ref{scale-quant-app}}
\begin{proof}
If $X=\{X(t):t \ge 0\}$, then $X$ is $H$-self-similar for $H=\frac{1}{r}$ for the stable processes indicated, and for $H=\frac{r}{2}$ when $X$ is a fractional Brownian motion. Given our assumptions on the sample paths we thus have (A-1) holding. The strict positivity of the densities for $t>0$ in assumption (A-2) is immediate by using the self-similarity property and the comments in Remark \ref{densities}. Also, in the stable cases, since the density, $f(1,\cdot)$, of $X(1)$, is symmetric about $0$ and unimodal (\cite{yamazato-unimodal}), it is decreasing away from the origin. Of course, this last claim is trivial for the fractional Brownian motions, so (A-2) holds.

The more subtle assumptions of (A-3) and (A-4) follow by applying results in \cite{kz-quant} for the processes indicated. In particular, the empirical CLT in (A-3) holds when the input process is a fractional Brownian motion by applying Proposition 2 of \cite{kz-quant}, and for the stable processes, because of the independent increments, from Theorem 2 of \cite{kz-quant}. The self-similarity assumption and that (A-2) holds allows one to check the assumptions on the densities $f(t,\cdot), t \in J,$ of Theorem 1 in \cite{kz-quant} when $E=J$, and since we have shown (A-3) holds, Theorem 1 of \cite{kz-quant} implies (A-4) for the processes indicated.

The assumption (A-5) for stable processes with cadlag sample paths follows from Proposition 5.6 of
\cite{led-tal-book}, and for sample continuous Gaussian processes on a compact metric space it is an immediate consequence of the Fernique-Landau-Shepp result, see, for example Proposition A.2.3 of  \cite{vw}.
Therefore, assumptions (A-$i$) hold for $i=1,2,3,4,5,$ and hence Theorem \ref{scale-quant-app} follows from Theorem \ref{scale-quant}.
\end{proof}

\section{Proof of Theorem \ref{quant-CLT-paths} and Some Applications}\label{sec:5}

We start with a lemma.

\begin{lem}\label{weak-conv-paths}
Let $W_n, n \ge 1,$ take values only in the closed subset $D$ of $\ell_{\infty}(E \times I)$, $G$ be a stochastic processes on $(\Omega,\mathcal{F}, P)$ with sample paths in $\ell_{\infty}(E\times I),$ and assume $W_n(\cdot) \Rightarrow G(\cdot)$ on 
$\ell_{\infty}(E\times I)$. Then, there exists a stochastic process $\hat G$ on $(\Omega,\mathcal{F}, P)$ with sample paths in $D$, $P(G= \hat G)=1,$ and $W_n(\cdot) \Rightarrow \hat G(\cdot)$ on the metric space $(D,d)$ with metric $d$ given by the sup-norm distance restricted to $D$.
In addition, if $G$ also has sample paths only in $D$, then  
$W_n(\cdot) \Rightarrow G(\cdot)$ on $(D,d).$ 
\end{lem}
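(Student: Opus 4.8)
The plan is to exploit the fact that weak convergence in $\ell_\infty(E\times I)$ to a Radon limit forces the limit to concentrate on any closed set that "asymptotically captures" the $W_n$, and then to invoke the portmanteau/continuous-mapping machinery for the inclusion map. First I would record that $W_n \Rightarrow G$ in $\ell_\infty(E\times I)$ means, in the Hoffmann-J\o rgensen sense, that $\E^* F(W_n) \to \E F(G)$ for every bounded continuous $F\colon \ell_\infty(E\times I)\to\mathbb R$, and that $G$ is Radon, hence (by the cited Example 1.5.10 of \cite{vw}) tight: for every $\eta>0$ there is a compact $K_\eta\subseteq\ell_\infty(E\times I)$ with $P(G\in K_\eta)>1-\eta$. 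Since $D$ is closed and $W_n$ takes values only in $D$, the set-inclusion $\{W_n\in D^c\}=\emptyset$ is trivially measurable with probability $0$, so the asymptotic measurability of $W_n$ is unaffected by restricting the range.

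The key step is to show $P(G\in D)=1$. For this I would use a closed-set portmanteau inequality valid for the generalized weak convergence of \cite{vw} (Theorem 1.3.4(iii) there): if $W_n\Rightarrow G$ and $C$ is closed, then $\limsup_n P^*(W_n\in C)\le P(G\in C)$; wait — more carefully, the right form is $\limsup_n P_*(W_n\in F)\le P(G\in F)$ for $F$ closed, or equivalently a $\liminf$ bound on open sets. Since $W_n\in D$ surely, $P_*(W_n\in D)=1$ for all $n$, hence $P(G\in D)\ge\limsup_n P_*(W_n\in D)=1$. Thus the event $\{G\in D\}$ has inner probability (hence probability, $D$ being a member of the Borel $\sigma$-field generated by the ball $\sigma$-field on which $G$ is measurable — here one uses that $G$ is Radon, so its law is a genuine Borel measure and $D$, being closed, is measurable) equal to $1$. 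Define $\hat G(\omega) = G(\omega)$ when $G(\omega)\in D$ and $\hat G(\omega)=z_0$ for some fixed $z_0\in D$ otherwise; then $\hat G$ has all paths in $D$, $P(G=\hat G)=1$, and $\hat G$ is again measurable into $(D,d)$.

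It remains to transfer the weak convergence to $(D,d)$. Here I would note that the inclusion $\iota\colon (D,d)\hookrightarrow \ell_\infty(E\times I)$ is an isometric embedding with closed image, so a bounded continuous $F\colon (D,d)\to\mathbb R$ need not extend continuously to all of $\ell_\infty$, but one does not need that: by Theorem 1.3.10 (or the change-of-space results, Theorem 1.3.25) of \cite{vw}, if $W_n\Rightarrow G$ in $\ell_\infty$, all $W_n$ take values in $D$, and the limit concentrates on $D$, then $W_n\Rightarrow \hat G$ in $(D,d)$; concretely, for bounded continuous $F$ on $(D,d)$ one checks $\E^* F(W_n)\to\E F(\hat G)$ by a tightness-plus-approximation argument, using the compacts $K_\eta\cap D$ (compact in $(D,d)$ since $D$ is closed and $d$ is the restricted metric) on which $F$ extends boundedly continuously to $\ell_\infty$ via Tietze/Dugundji, and controlling the complement by $P(\hat G\notin K_\eta\cap D)<\eta$ together with the asymptotic tightness of $W_n$ inherited from the convergence in $\ell_\infty$. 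Finally, the last sentence is immediate: if $G$ itself already has all paths in $D$, then $G=\hat G$ and $W_n\Rightarrow G$ on $(D,d)$.

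The main obstacle I anticipate is the measurability bookkeeping in the second step — pinning down that $\{G\in D\}$ is measurable and has probability one, and that the portmanteau inequality one wants ($\liminf P_*(W_n\in U)\ge P(G\in U)$ for open $U$, equivalently the closed-set version) holds in the outer-expectation framework of \cite{vw} without extra measurability hypotheses on $W_n$; the fact that $W_n$ is literally $D$-valued makes $P_*(W_n\in D)=1$ trivial, which is exactly what rescues the argument. The transfer to $(D,d)$ in the third step is then routine given the standard change-of-metric-space lemmas in \cite{vw}.
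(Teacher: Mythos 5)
Your proposal is correct and follows essentially the same route as the paper's proof: the closed-set portmanteau inequality (Theorem 1.3.4 of \cite{vw}) applied to $D$ gives $P(G\in D)=1$ since $W_n$ is literally $D$-valued, the modification $\hat G$ on the null set $\{G\notin D\}$ is exactly the paper's construction, and the transfer to $(D,d)$ is by the same change-of-space result (Theorem 1.3.10 of \cite{vw}). The extra Tietze/tightness discussion you sketch is not needed once that theorem is invoked, and your brief hesitation between $P^*$ and $P_*$ in the portmanteau is immaterial here because $W_n\in D$ surely.
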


\begin{proof}
Since $W_n(\cdot) \Rightarrow G(\cdot)$ on 
$\ell_{\infty}(E\times I)$, by definition of convergence in law in this setting $G$ is a Borel measurable mapping from $(\Omega,\mathcal{F})$ into $(\ell_{\infty}(E\times I), \mathcal{B})$, where $\mathcal{B}$ denotes the Borel subsets of  $\ell_{\infty}(E\times I)$. Therefore, Theorem 1.3.4 of \cite{vw} implies 
$$
\limsup_{n \rightarrow \infty}P^{*}(W_n(\cdot) \in D) \le P( G \in D),
$$
and hence $P(G \in D)=1$. 

If we define $\hat G(\cdot, \omega)=G(\cdot,\omega)$ for $\omega \in \{\omega: G^{-1}(D)\}$, and to be $d_0$ for some $d_0 \in D$ on $ \{\omega: G^{-1}(D^c) \}$, then $\hat  G$ is also Borel measurable, $P(G=\hat G)=1$, $W_n(\cdot) \Rightarrow \hat G(\cdot)$ on 
$\ell_{\infty}(E\times I)$, and by Theorem 1.3.10 of \cite{vw} we then also have $W_n(\cdot) \Rightarrow \hat G(\cdot)$ on $D$.

Clearly $\hat G$ is Borel measurable, $P(G= \hat G)=1$, and $W_n(\cdot) \Rightarrow  G(\cdot)$ on 
$\ell_{\infty}(E\times I)$. Moreover, if $G$ only takes values in $D$, then $G =\hat G$ on all of $\Omega$.  

\end{proof}

\subsection{Proof of Theorem \ref{quant-CLT-paths}}

\begin{proof}

If $\{X_t: t \in E\}$ has a version with paths in $\mathbb{C}_{e_1}(E)$, then taking i.i.d. copies of this continuous version to build the quantile processes, the proof of Lemma 3 implies one has with probability one that $W_n(t,\alpha)$ is continuous on $( E,e_1)$ for each $\alpha \in (0,1)$. In addition, for each $n \ge 1$, by Lemma 3 we have $\alpha \rightarrow \tau_{\alpha}^n(t)$ is in $ \mathbb{D}_2(I)$ for all $t\in E$, and therefore for all $n \ge 1$ the paths of $W_n$ satisfy
\begin{equation}\label{quant-CLT-functions-6}
W_n(\cdot,\cdot) \in \mathbb{C}_{e_1}(E) \otimes\mathbb{D}_2(I).
\end{equation}
Moreover, since we are assuming
the empirical quantile CLT with $W_n \Rightarrow \tilde G$ on $\ell_{\infty}(E \times I)$, Lemma \ref{weak-conv-paths} implies there is a Gaussian process $\hat G$ with values in $\mathbb{C}_{e_1}(E) \otimes\mathbb{D}_2(I)$ such that $\hat G$ and $\tilde G$ induce the same Borel probability measure on $\ell_{\infty}(E\times I)$ and $W_n \Rightarrow \hat G$ on $\mathbb{C}_{e_1}(E) \otimes\mathbb{D}_2(I)$ with the topology that is given by the sup-norm. Thus (i) holds.

The proof of (ii) is entirely similar, since the assumptions of (ii) and Lemma \ref{ctble-unctble-sups} imply that (\ref{quant-CLT-functions-6}) holds with $\mathbb{C}_{e_1}(E) \otimes\mathbb{D}_2(I)$ replaced by $\mathbb{D}_1([0,T])\otimes\mathbb{D}_2(I)$. 
Hence (ii) is verified as before.
\end{proof}

\subsection{Applications of Theorem \ref{quant-CLT-paths}} 
Here we indicate some self-similar processes $X$ to which Theorem \ref{quant-CLT-paths} applies, and provides a CLT for its empirical quantiles in classical function spaces. Immediate examples involve the processes studied in Theorem \ref{scale-quant-app}, but there are many other self-similar processes to which the conclusions in Theorem \ref{scale-quant-app} also apply. Moreover, it is important to note that Theorem \ref{scale-quant-app} depends heavily on Theorem \ref{scale-quant}, so we are able to obtain empirical quantile CLTs under circumstances not available from \cite{kz-quant} alone, but Theorem \ref{quant-CLT-paths} can be applied  to obtain results in that setting as well.

\subsubsection{Symmetric Stable Processes}

The parameterizations in $r$ are as before, and our first application assumes the input process $X$ is a cadlag symmetric $r$-stable process with stationary independent increments, $P(X(0)=0)=1$, and $0 <r<2.$ Then, for $T \in (0,\infty)$
and $I$ a closed subinterval of $(0,1)$ we have the empirical quantile CLT of Theorem \ref{scale-quant-app} in $\ell_{\infty}([0,T]\times I)$.
Hence the conclusions of part (ii) of Theorem \ref{quant-CLT-paths} apply to these empirical quantile processes. In particular, for fixed $\alpha \in (0,1)$ the empirical quantile CLT holds in the Banach space $\mathbb{D}_1([0,T])$ with the sup-norm.

\subsubsection{Fractional Brownian Motions}

Let $X=\{X(t): t \ge 0\}$ be a centered sample continuous $r$-fractional Brownian motion, where $0<r<2$, $X(0)=0$ with probability one, and $\E(X_t^2)= t^{r}$ for $t \ge 0$.
Then, Theorem \ref{scale-quant-app} implies the empirical quantile CLT for these processes in $\ell_{\infty}([0,T]\times I)$, and hence the conclusions of part (i) of Theorem \ref{quant-CLT-paths} with $E=[0,T]$ apply to the empirical quantile processes with these sample continuous inputs. The special case $r=1$ implies $X$ is Brownian motion, and for fixed $\alpha \in (0,1),$ the quantile CLT holds in the Banach space $\mathbb{C}_{e_1}([0,T])$ with the sup-norm. As mentioned earlier, this implies the CLT for medians in  \cite{swanson-scaled-median}, and for other individual quantile levels $\alpha \in (0,1)$ in \cite{swanson-fluctuations}, even if $P(X(0)=0)=1$. Moreover, if $\alpha=\beta=\frac{1}{2}$ and $s,t \in (0,T]$, the covariance of the limiting Gaussian process for $0<r<2$ is
$$
\frac{P(X(s) \le 0,X(t) \le 0)- \frac{1}{4}}{f(s,0)f(t,0)}= s^{\frac{r}{2}} t^{\frac{r}{2}} \sin^{-1}(\frac{E(X_sX_t)}{s^{\frac{r}{2}}t^{\frac{r}{2}}})
$$
where $2E(X_sX_t)= s ^{r} + t^{r} - |s -t|^{r}$, and the equality follows from a standard Gaussian identity. Of course, when $r=1$ 
this is as in \cite{swanson-scaled-median}.

\subsubsection{Integrated Brownian Motion and Self-Similar Iterated Processes}

The conclusions in part (i) of Theorem \ref{quant-CLT-paths} also hold when $\{X(t): t \ge 0\}$ is an $m$-times integrated  sample continuous Brownian motion, and also for a variety of iterated processes of the form $\{ B(r(t)): t \ge 0\}$, where
$\{B(t): t \in \mathbb{R}\}$ is a centered sample continuous Brownian motion on $\mathbb{R}$ such that $E(B(s)B(t)) = \min(|s|,|t|)$ for $s$ and $t$ of the same sign, and zero otherwise. The process $\{r(t): t \ge 0\}$ is assumed to be self-similar, sample continuous, independent of the Brownian motion, and sufficiently general that these examples cover iterated Brownian motion, L\'evy's stochastic area process, and also many other situations. 

The general approach to the proof of these results is as indicated for the previous examples. That is, as in Theorem \ref{scale-quant-app} one shows that conditions (A-1),$\cdots$,(A-5) hold, and hence Theorem \ref{scale-quant} implies the empirical quantile CLT in $\ell_{\infty}(E \times I)$. Then Theorem \ref{quant-CLT-paths} is applicable to obtain the quantile CLTs in spaces with more regular sample paths. Except for verifying 
the condition (A-3) for the iterated processes, checking these conditions is similar to what is done above. Hence the details are not included here, but will be posted in an appendix to our manuscript online.

\begin{rem}
The Brownian sheet has continuous paths and is close enough to being self-similar that by using the methods above, results of the type discussed here should also hold for the sheet. We have checked these results when the input process is the 2-parameter sheet on $[0,T]\times[0,T]$, but the broad scope of the argument is similar to what is done in this paper, so they also are not included. The Brownian sheet is of particular interest in this setting since Corollary 2 of \cite{kz-quant} shows the empirical process CLT fails for the tied down sheet on $[0,T] \times [0,T]$, and hence the self-similarity properties are required to get the conclusions needed for the empirical quantile process CLT in $\ell_{\infty}([0,T] \times[0,T])$.

\end{rem} 

\section{Additional Comments on Lemma \ref{ctble-unctble-sups}}

The referee has pointed out that there is a stronger version of Lemma \ref{ctble-unctble-sups}, which, while not needed for the processes we consider, could eventually apply to a wider class of processes. We state this refinement in Theorem A below. The lemma following Theorem A uses the more global assumption of H-self-similarity to easily imply an extension of Lemma \ref{ctble-unctble-sups}. A useful reference for a proof of Theorem A is the material of section \ref{sec:5}, and also some of the results referenced there. 

Previously, we dealt with quantiles as defined in (\ref{quantile}). These may be called the minimal quantiles, since they give the left-most quantile. The new lemma also deals with several new quantities including maximal quantiles and the processes, $\{X(s-): s>0\}$ and the corresponding iid copies. Namely, we consider 
\begin{align*}
H(t,x)=P(X(t-) \le x) \ &\text{ and for all }  t > 0,\ x \in \mathbb{R}\ \text{ and }\  \alpha \in (0,1),\\ 
H^{-1}(t,\alpha)&= \inf\{x:H(t,x) \ge \alpha\},
\end{align*}
and for all $\omega \in \Omega$ and $n \ge 1$ let
\[
H_n(t,x,\omega):= \frac{1}{n} \sum_{i=1}^n I_{(-\infty, x]}(X_i(t-,\omega)),\forall t > 0,\forall x \in \mathbb{R},
\]
as well as the associated minimal quantiles 
\[H^{-1}(t,\alpha)= \inf\{x:H(t,x) \ge \alpha\}\, \text{ and } H_{n}^{-1}(t,\alpha)=\inf\{x: H_{n}(t,x)\ge \alpha\}.
\]

 \begin{thma}\label{thma} Let $\alpha,\alpha_1,\alpha_2,\ldots \in (0,1)$ be given numbers such that $\alpha_k \rightarrow \alpha$. Let $s,s_1,s_2,\ldots$ be given numbers such that $0 \le s_k<s$ and $s_k \uparrow s$. Let
$t,t_1,t_2,\ldots$ be given numbers such that $0 \le t< t_k$ and $t_k \downarrow t$. If $\omega \in \Omega$ and $n \ge 1 $ we have:

\begin{enumerate} \item If $X(t)$ admits a unique $\alpha$-quantile, then 
\[ 
F^{-1}(t,\alpha)= \lim_{k \rightarrow \infty}  F^{-1}(t_k,\alpha_k).
\]
\item If $X(s-)$ admits a unique $\alpha$-quantile, then 
\[ 
H^{-1}(s,\alpha)= \lim_{k \rightarrow \infty}  F^{-1}(s_k,\alpha_k).
\]
\item If either $\alpha \notin \{\frac{1}{n},\ldots,\frac{n-1}{n}\}$ or $\sup_{k \ge 1}\alpha_k \le \alpha$, we have
\end{enumerate}
\begin{align*}&(a)\ \ F_n^{-1}(t,\alpha,\omega) = \lim_{k \rightarrow \infty}F_n^{-1}(t_k,\alpha_k,\omega),\\
&(b)\ \ H_n^{-1}(s,\alpha,\omega) = \lim_{k \rightarrow \infty}F_n^{-1}(s_k,\alpha_k,\omega).
\end{align*}

\noindent Moreover, if $X(t)$ and $X(s-)$ have unique $\alpha$-quantiles for all $t \ge 0$ and all $s>0$, then $ F^{-1}(\cdot,\alpha)$ and  $ F_n^{-1}(\cdot,\alpha,\omega)$ are cadlag functions, and:

\begin{enumerate} 
\item[(4)] $ F^{-1}(s-,\alpha)=  H^{-1}(s,\alpha), ~  F_n^{-1}(s-,\alpha,\omega)=  H_n^{-1}(s,\alpha,\omega) \forall s>0.$

\item[(5)] If $s>0$ and $X(s)=X(s-)$ in distribution, then $ F^{-1}(\cdot,\alpha)$ is continuous at $s$.
\end{enumerate}
\end{thma}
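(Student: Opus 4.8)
The plan is to reduce the entire statement to one elementary lemma on quantiles and then combine it with the path regularity established inside the proof of Lemma \ref{ctble-unctble-sups}. The lemma I would prove first is: \emph{if real random variables $Y_k$ converge in distribution to $Y$, if $\alpha_k\to\alpha\in(0,1)$, and if $Y$ has a unique $\alpha$-quantile $q$, then $F_{Y_k}^{-1}(\alpha_k)\to q$.} To see it, note that since $q=\inf\{x:F_Y(x)\ge\alpha\}$ is the only $\alpha$-quantile one has $F_Y(x)<\alpha$ for all $x<q$ and $F_Y(y)>\alpha$ for all $y>q$; given $\varepsilon>0$ choose continuity points $c_1\in(q-\varepsilon,q)$ and $c_2\in(q,q+\varepsilon)$ of $F_Y$, so that $F_{Y_k}(c_1)\to F_Y(c_1)<\alpha$ and $F_{Y_k}(c_2)\to F_Y(c_2)>\alpha$; then for all large $k$ we have $F_{Y_k}(c_1)<\alpha_k\le F_{Y_k}(c_2)$, which forces $F_{Y_k}^{-1}(\alpha_k)\in(c_1,c_2]$, and letting $\varepsilon\downarrow 0$ finishes it.

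For parts (1), (2) and (5) I would then argue as follows. Since the paths of $X$ are cadlag, $t_k\downarrow t$ gives $X(t_k)\to X(t)$ almost surely, hence in distribution, so $F(t_k,\cdot)$ converges weakly to $F(t,\cdot)$ and the lemma yields (1); likewise $0\le s_k\uparrow s$ gives $X(s_k)\to X(s-)$ almost surely, hence in distribution, so $F(s_k,\cdot)$ converges weakly to $H(s,\cdot)$ and the lemma (using now the unique $\alpha$-quantile of $X(s-)$) yields (2). For (5), if $X(s)$ and $X(s-)$ have the same law then $F(s,\cdot)=H(s,\cdot)$, so $F^{-1}(s,\alpha)=H^{-1}(s,\alpha)$, which equals $F^{-1}(s-,\alpha)$ by part (4); together with the right continuity from (1) this says $F^{-1}(\cdot,\alpha)$ is continuous at $s$.

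For parts (3) and (4) I would fix $\omega$ so that the paths $X_i(\cdot,\omega)$, $1\le i\le n$, are cadlag. As recorded in the proof of Lemma \ref{ctble-unctble-sups}, $F_n^{-1}(u,\alpha,\omega)=X_{(j(\alpha))}(u,\omega)$, where $X_{(1)}(u,\omega)\le\cdots\le X_{(n)}(u,\omega)$ are the order statistics of $X_1(u,\omega),\dots,X_n(u,\omega)$ and $j(\alpha)=\lceil n\alpha\rceil$ does not depend on $u$. Under either hypothesis of (3) one checks $j(\alpha_k)=j(\alpha)$ for all large $k$: if $\alpha\notin\{1/n,\dots,(n-1)/n\}$ then $n\alpha\notin\Z$ and the ceiling is constant near $n\alpha$, while if $\sup_k\alpha_k\le\alpha$ then eventually $n\alpha_k\in(\lceil n\alpha\rceil-1,\lceil n\alpha\rceil]$, again giving $\lceil n\alpha_k\rceil=\lceil n\alpha\rceil$. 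By the Lipschitz bound (\ref{ctble-unctble-4}) the sorting map is $1$-Lipschitz for the supremum norm, so each order-statistic process $X_{(k)}(\cdot,\omega)$ inherits right continuity and left limits from the $X_i(\cdot,\omega)$, and its left limit at $s$ equals the $k$-th order statistic of $X_1(s-,\omega),\dots,X_n(s-,\omega)$, which for $k=j(\alpha)$ is precisely $H_n^{-1}(s,\alpha,\omega)$. Hence $F_n^{-1}(t_k,\alpha_k,\omega)=X_{(j(\alpha))}(t_k,\omega)\to X_{(j(\alpha))}(t,\omega)=F_n^{-1}(t,\alpha,\omega)$, which is (3a), and $F_n^{-1}(s_k,\alpha_k,\omega)=X_{(j(\alpha))}(s_k,\omega)\to H_n^{-1}(s,\alpha,\omega)$, which is (3b). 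Taking $\alpha_k\equiv\alpha$ (which satisfies $\sup_k\alpha_k\le\alpha$, so (3a)--(3b) hold for every $\alpha\in(0,1)$), parts (1)--(2) show $F^{-1}(\cdot,\alpha)$ is cadlag with $F^{-1}(s-,\alpha)=H^{-1}(s,\alpha)$, and (3a)--(3b) give the corresponding facts for $F_n^{-1}(\cdot,\alpha,\omega)$; this is (4).

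I expect the main obstacle to be the treatment of (3): delineating exactly when $j(\alpha_k)=j(\alpha)$ (this is precisely what the two alternative hypotheses are tailored to guarantee) and, more substantively, identifying the left limit of the order-statistic process with the order statistic of the left-limit values $X_i(s-,\omega)$, for which the Lipschitz estimate (\ref{ctble-unctble-4}) and the cadlag structure from Lemma \ref{ctble-unctble-sups} do the real work. The population statements (1), (2), (5) should be routine once the quantile-continuity lemma of the first step is in hand.
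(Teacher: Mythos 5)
Your proof is correct. Note that the paper itself gives no proof of Theorem A: it is attributed to the referee and accompanied only by a pointer to the order-statistic material developed in the proof of Lemma \ref{ctble-unctble-sups} (the representation $F_n^{-1}(t,\alpha,\omega)=X_{(j(\alpha))}(t,\omega)$ and the $1$-Lipschitz sorting bound (\ref{ctble-unctble-4})), which is precisely what you use for parts (3) and (4); your quantile-convergence lemma correctly exploits uniqueness to get $F_Y(x)<\alpha<F_Y(y)$ for $x<q<y$, and your case analysis showing $\lceil n\alpha_k\rceil=\lceil n\alpha\rceil$ eventually is exactly what the two alternative hypotheses in (3) are designed for. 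So your argument is a complete and correct proof along the only route the paper indicates.
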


Combining the following simple lemma with Theorem A, we obtain the results in Lemma \ref{ctble-unctble-sups}, and also additional results involving  joint behavior in $t$ and $\alpha$ for the quantiles and empirical quantiles.

\begin{lema}\label{lema} Let $\{X(t): t \ge 0\}$ be cadlag and $H$-self-similar for some $H>0$, and let $\alpha \in (0,1)$ and $u>0$ be given numbers such that $X(u)$ has a unique $\alpha$-quantile. Then, $X(t)$ has a unique $\alpha$-quantile for all $t \ge 0,$ and for all $s>0$ we have $X(s-)=X(s)$ in distribution and $X(s-)$ has a unique $\alpha$-quantile.
\end{lema}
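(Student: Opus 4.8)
The plan is to exploit $H$-self-similarity to reduce every statement to the single time $u$ at which we are given a unique $\alpha$-quantile, and then handle the left-limit claim via a scaling/continuity argument. First I would record the two basic scaling identities: since $X$ is $H$-self-similar with $H>0$, we have $X(0)=0$ a.s., and for $t>0$ the distribution function satisfies $F(t,x)=F(1,t^{-H}x)$ (equivalently $X(t)\stackrel{d}{=}t^H X(1)$). Consequently $X(t)$ has a unique $\alpha$-quantile if and only if $X(1)$ does, because uniqueness of the $\alpha$-quantile of $X(t)$ is exactly the condition that $F(1,\cdot)$ not be flat at level $\alpha$ on an interval, a property invariant under the bijective rescaling $x\mapsto t^{-H}x$. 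Applying this first with $t=u$ (where uniqueness is assumed) gives uniqueness for $X(1)$, and then for every $t>0$; the case $t=0$ is immediate since $X(0)=0$ is a constant, hence has the unique $\alpha$-quantile $0$ for every $\alpha\in(0,1)$. This proves the first assertion.

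Next I would turn to the statement that $X(s-)=X(s)$ in distribution for every $s>0$. The natural route is to show the map $s\mapsto \mathcal{L}(X(s))$ (the law of $X(s)$) is continuous in $s$ for $s>0$, so that in particular the left limit of this law-valued map at $s$ equals $\mathcal{L}(X(s))$; since $X$ is cadlag, $X(s_k)\to X(s-)$ a.s. along any $s_k\uparrow s$, hence $\mathcal{L}(X(s_k))\to\mathcal{L}(X(s-))$ weakly, and combined with the continuity this forces $\mathcal{L}(X(s-))=\mathcal{L}(X(s))$. The continuity of $s\mapsto\mathcal{L}(X(s))$ for $s>0$ follows from the scaling relation $X(s)\stackrel{d}{=}s^H X(1)$: as $s'\to s>0$, $s'^H\to s^H$, so $s'^H X(1)\to s^H X(1)$ a.s. and therefore in distribution, i.e. $\mathcal{L}(X(s'))\to\mathcal{L}(X(s))$ weakly. (One should note that the cadlag hypothesis is what lets us speak of $X(s-)$ at all and guarantees $X(s-)$ exists as an a.s. limit.) Finally, since $X(s-)\stackrel{d}{=}X(s)$ and $X(s)$ has a unique $\alpha$-quantile by the first part, $X(s-)$ also has a unique $\alpha$-quantile.

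I expect the only genuinely delicate point is making the phrase "$X(s-)=X(s)$ in distribution" precise and legitimate: one must be careful that $X(s-):=\lim_{r\uparrow s}X(r)$ is well-defined $P$-a.s. (guaranteed by the cadlag assumption, on a complete probability space) and that weak convergence of $\mathcal{L}(X(s_k))$ along $s_k\uparrow s$ really identifies $\mathcal{L}(X(s-))$ — this uses that a.s. convergence implies convergence in distribution, applied to the sequence $X(s_k)\to X(s-)$. Everything else is a routine consequence of the two scaling identities. I would also remark, since Theorem A is stated just above, that Lemma A is exactly the hypothesis-checking device that lets one invoke Theorem A (uniqueness of $\alpha$-quantiles for all $t\ge 0$ and all $s>0$, plus $X(s)=X(s-)$ in distribution) and thereby recover and extend Lemma \ref{ctble-unctble-sups}.
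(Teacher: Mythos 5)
Your proof is correct. The paper itself gives no proof of Lemma A --- it is stated in the closing section as a ``simple lemma'' (due to the referee) whose ingredients, the scaling identities $X(0)=0$ a.s. and $F(t,x)=F(1,t^{-H}x)$ (equivalently $\tau_\alpha(t)=t^H\tau_\alpha(1)$), already appear in the proof of Lemma \ref{ctble-unctble-sups} --- and your argument supplies exactly the intended routine details: transfer of quantile uniqueness under the strictly increasing bijection $x\mapsto t^H x$, the constant case $t=0$, and the identification $X(s-)\,{\buildrel d \over =}\,X(s)$ by combining the a.s. convergence $X(s_k)\to X(s-)$ from the cadlag hypothesis with the weak continuity of $s\mapsto \mathcal{L}(s^H X(1))$.
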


{\bf Acknowledgement:} It is a pleasure to thank the referee for a careful reading of the manuscript. His comments and suggestions led to a number of refinements, and in particular improved our previous statement of Theorem \ref{quant-CLT-paths}. We also thank the referee for allowing us to include Theorem A and Lemma A in the paper.


\begin{thebibliography}{1}
\bibitem{Dudley-unif-clt} R. M. Dudley, \textit{Uniform central limit theorems.} Cambridge University Press (1999).

\bibitem{hoff} J. Hoffmann-J\"{o}rgensen, \textit{Sums of independent Banach space valued random variables.} Studia Math. \textbf{52} (1974), 159--186.

\bibitem{JH-book-I} J. Hoffmann-J\"{o}rgensen,{\it{ Probability, with a view toward statistics, vol.1}}, Chapman and Hall, New York (1994).

\bibitem{kkz} J. Kuelbs, T. Kurtz and J. Zinn, \textit{A CLT for empirical processes involving time dependent data.} Ann. Probab. (2013), vol. 41, No. 2, 785-816.

\bibitem{kz-quant} James Kuelbs and Joel Zinn, \textit{Empirical quantile clt's for time dependent data.} Progress in Probab. (2013) Vol. 66, 169-196.

\bibitem{led-tal-book} Michel Ledoux and Michel Talagrand, \textit{Probability in Banach spaces- Isoperimetry and processes.} Ergebnisse der Mathematik und ihrer Grenzgebiete Springer-Verlag \textit{23} (1991), xii+480.

\bibitem{reiss-book} R. D. Reiss, \textit{Approximate distributions of order statistics.} Springer Series in Statistics, Springer-Verlg, New York (1989), xii+355.

\bibitem{swanson-scaled-median} J. Swanson, \textit{Weak convergence of the scaled median of independent {B}rownian motions.} Probab. Theory Related Fields. \textbf{138} (2007), 269--304.

\bibitem{swanson-fluctuations} J. Swanson, \textit{Fluctuations of the empirical quantiles of independent
              {B}rownian motions.} Stochastic Process. Appl.
\textbf{121} (2011), 479--514.

\bibitem{vw} A. W. van der Vaart and J. A. Wellner, \textit{Weak convergence and empirical processes.} Springer Series in Statistics, Springer-Verlag, New York (1996), xvi+508.

\bibitem{vervaat-quantiles} W. Vervaat, \textit{Functional central limit theorems for processes with positive drift and their inverses.} Z. Wahrscheinlichkeitstheorie und Verw. Gebiete.
\textbf{23} (1972), 245--253.

\bibitem{yamazato-unimodal} Makoto Yamazato, \textit{Unimodality of infinitely divisible distribution functions of class {$L$}.} Ann. Probab. \textbf{6} (1978), 523--531. 
\end{thebibliography}
\end{document}